\newtheorem{thm}{Theorem}[section]
\newtheorem{lem}[thm]{Lemma}
\newtheorem{cor}[thm]{Corollary}
\newtheorem{defn}[thm]{Definition}
\newtheorem{prob}[thm]{Problem}
\newtheorem*{mainthm}{Theorem \ref{thm:main}}
\newtheorem*{ack}{Acknowledgements}
\newtheorem*{corone}{Corollary \ref{cor:fibered}}
\newtheorem*{cormain}{Corollary \ref{cor:main}}
\def \Z{\mathbb{Z}}
\def \T{\mathcal{T}}
\def \L{\mathcal{L}}
\begin{document}

\title[Generalized crossing changes in satellite knots]{Generalized crossing changes in satellite knots}
\author[C. Balm]{Cheryl Balm}

\address[]{Department of Mathematics, Michigan State
University, East Lansing, MI, 48824}

\email[]{balmcher@math.msu.edu}
%\thanks{ {\today}}
\thanks{ {Research supported by NSF grant DMS-1105843.
}}

\begin{abstract} We show that if $K$ is a satellite knot in the 3-sphere $S^3$ which admits a generalized cosmetic crossing change of order $q$ with $|q| \geq 6$, then $K$ admits a pattern knot with a generalized cosmetic crossing change of the same order.  As a consequence of this, we find that any prime satellite knot in $S^3$ which admits a torus knot as a pattern cannot admit a generalized cosmetic crossing change of order $q$ with $|q| \geq 6$.  We also show that if there is any knot in $S^3$ admitting a generalized cosmetic crossing change of order $q$ with $|q| \geq 6$, then there must be such a knot which is hyperbolic.

%
%\smallskip
%\smallskip
%\smallskip
%\smallskip
%\smallskip
%\smallskip
%
%\noindent {\it Keywords:} ???
%\smallskip
%\smallskip
%\smallskip
%\smallskip
%
%\noindent {\it Mathematics Subject Classification:} ???.
\end{abstract}

\maketitle

\bigskip

%\tableofcontents

\section{Introduction}

One of the many easily stated yet still unanswered questions in knot theory is the following: when does a crossing change on a diagram for an oriented knot $K$ yield a knot which is isotopic to $K$ in $S^3$?  We wish to study this and similar questions without restricting ourselves to any particular diagram for a given  knot, so we will consider crossing changes in terms of crossing disks.  A \emph{crossing disk} for an oriented knot $K\subset S^3$ is an embedded disk $D\subset S^3$ such that $K$ intersects ${\rm int}(D)$ twice with zero algebraic intersection number (see Figure \ref{fig:crcircle}).  A crossing change on $K$ can be achieved by performing $(\pm 1)$-Dehn surgery of $S^3$ along the \emph{crossing circle} $L = \partial D$.  (See \cite{pros-soss} for details on crossing changes and Dehn surgery.)  More generally, if we perform $(-1/q)$-Dehn surgery along the crossing circle $L$ for some $q \in \Z - \{ 0 \}$, we twist $K$ $q$ times at the crossing circle in question.  We will call this an \emph{order-$q$ generalized crossing change}. Note that if $q$ is positive, then we give $K$ $q$ right-hand twists when we perform $(-1/q)$-surgery, and if $q$ is negative, we give $K$ $q$ left-hand twists.

A crossing of $K$ and its corresponding crossing circle $L$ are called \emph{nugatory} if $L$ bounds an embedded disk in $S^3 - \eta(K)$, where $\eta(K)$ denotes a regular neighborhood of $K$ in $S^3$.  Obviously, a generalized crossing change of any order at a nugatory crossing of $K$ yields a knot isotopic to $K$.  

\begin{defn}{\rm
A (generalized) crossing change on $K$ and its corresponding crossing circle are called \emph{cosmetic} if the crossing change yields a knot isotopic to $K$ and is performed at a crossing of $K$ which is \emph{not} nugatory.  }
\end{defn}

The following question, often referred to as the nugatory crossing conjecture, is Problem 1.58 on Kirby's list \cite{kirby}.

\begin{figure}
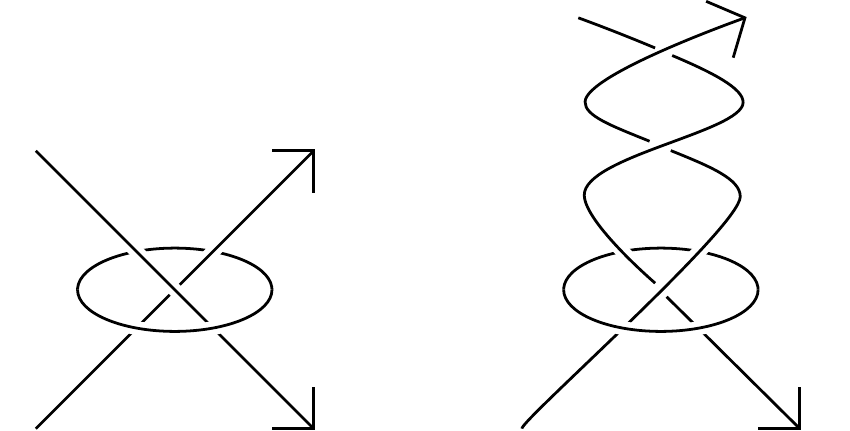
\caption{On the left is a crossing circle $L$ bounding a crossing disk $D$.  On the right is the knot resulting from an order-2 generalized crossing change at $L$.}\label{fig:crcircle}
\end{figure}

\begin{prob}\label{q:ncc}{\rm
Does there exist a knot $K$ which admits a cosmetic crossing change?  Conversely, if a crossing change on a knot $K$ yields a knot isotopic to $K$, must the crossing be nugatory?}
\end{prob}

Note that a crossing change (in the traditional sense) is the same as an order-$(\pm 1)$ generalized crossing change.  Hence, one can ask the following stronger question concerning cosmetic generalized crossing changes.

\begin{prob}\label{q:gncc}{\rm
Does there exist a knot $K$ which admits a cosmetic generalized crossing change of any order?}
\end{prob}

Problem \ref{q:ncc} was answered for the unknot by Scharlemann and Thompson when they showed that the unknot admits no cosmetic crossing changes in \cite{schar-thom} using work of Gabai \cite{gabai}.  Progress was made towards answering Problem \ref{q:ncc} for knots of braid index three by Wiley \cite{wiley}. Obstructions to cosmetic crossing changes in genus-one knots were found by the author with Friedl, Kalfagianni and Powell in \cite{balm} using Seifert forms and the first homology of the 2-fold cyclic cover of $S^3$ over the knot.  In particular, it was shown there that twisted Whitehead doubles of non-cable knots do not admit cosmetic crossing changes.

With regard to the stronger question posed in Problem \ref{q:gncc}, it has been shown by Kalfagianni that the answer to this question is no for fibered knots \cite{kalf} and by Torisu that the answer is no for 2-bridge knots \cite{torisu}.  The answer is also shown to be no for closed 3-braids and certain Whitehead doubles by the author and Kalfagianni in \cite{balm3}.  Torisu reduces Problem \ref{q:gncc} to the case where $K$ is a prime knot in \cite{torisu}.  

In this paper we will primarily be concerned with satellite knots, which are defined in the following way.

\begin{defn}{\rm
A knot $K$ is a \emph{satellite knot} if $M_{K} = \overline{S^3 - \eta(K)}$ contains a torus $T$ which is incompressible and not boundary-parallel in $M_{K}$ and such that $K$ is contained in the solid torus $V$ bounded by $T$ in $S^3$.  Such a torus $T$ is called a \emph{companion torus} for $K$.  Further, there exists a homeomorphism $f:(V', K') \to (V,K)$ where $V'$ is an unknotted solid torus in $S^3$ and $K'$ is contained in $\rm{int}(V')$.  The knot $K'$ is called a \emph{pattern knot} for the satellite knot $K$.  Finally, we require that $K$ is neither the core of $V$ nor contained in a 3-ball $B^3 \subset V$, and likewise for $K'$ in $V'$.}
%and $K'$ are geometrically essential in and not the core of $V$ and $V'$, respectively.}
\end{defn}

We may similarly define a \emph{satellite link}.  Note that a patten knot for a given satellite knot may not be unique, but it is unique once we have specified the companion torus $T$ and the map $f$.  In general, a torus $T$ in any orientable 3-manifold $N$ is called \emph{essential} if $T$ is incompressible and not boundary-parallel in $N$.  %Hence, satellite knots are exactly the class on knots in $S^3$ which admit one or more essential tori in their complements (This is not true, however, for satellite links, since we further require that all components of a satellite link $\K$ are contained in the solid torus bounded by any companion torus for $\K$).  

The purpose of this paper is to prove the following.

\begin{mainthm}
Suppose $K$ is a satellite knot which admits a cosmetic generalized crossing change of order $q$ with $|q| \geq 6$.  Then $K$ admits a pattern knot $K'$ which also has an order-$q$ cosmetic generalized crossing change.
\end{mainthm}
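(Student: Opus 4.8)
Let $L$ be a crossing circle, bounding a crossing disk $D$, at which $K$ admits the given order-$q$ cosmetic generalized crossing change, so that $(-1/q)$-surgery on $L$ carries $K$ to an isotopic knot, which we write $K(q)$. Let $T$ be a companion torus for $K$ bounding the solid torus $V\supset K$, and let $f\colon (V',K')\to (V,K)$ be the defining homeomorphism with $V'$ an unknotted solid torus in $S^3$. The plan is to isotope $L$ so that the entire crossing change is supported inside ${\rm int}(V)$, transport it through $f$ to a cosmetic generalized crossing change on the pattern $K'$, and use the hypothesis $|q|\ge 6$ to exclude the configurations in which $L$ is forced to meet $T$ essentially.

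\textbf{Minimal position and dichotomy.}
First I would isotope $T$ in $M_K$, keeping it disjoint from $K$, so as to minimize the number of components of $D\cap T$; this intersection is a collection of arcs and circles in $D$. Since $T$ is incompressible in $M_K$ and $M_K$ is irreducible, an innermost-circle argument removes every circle of $D\cap T$ bounding a disk in $D$ disjoint from $K$, and an outermost-arc argument removes the inessential arcs; because $V$ is knotted (its exterior is a nontrivial knot exterior, hence has incompressible boundary), the only surviving circles are meridians of $V$, and each meridian disk they cut off in $D$ must meet $K$ (else $K$ would lie in a ball in $V$). This leaves two cases: (A) $D\cap T=\varnothing$, in which case $D$, being connected and forced to reach $K\subset V$, lies in ${\rm int}(V)$; and (B) $D$ cannot be isotoped off $T$, in which case pieces of $D$ together with sub-surfaces of $T$ assemble into an essential annulus (or torus) in the exterior of $K\cup L$ meeting $L$.

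\textbf{The localized case.}
In case (A) the crossing change is supported in ${\rm int}(V)$, so $D':=f^{-1}(D)$ is a crossing disk for $K'$ with crossing circle $L':=f^{-1}(L)\subset{\rm int}(V')$, and $(-1/q)$-surgery on $L'$ performs an order-$q$ generalized crossing change on $K'$; it remains to verify this is cosmetic. Since $L$, hence $L'$, is unknotted in $S^3$, each surgery returns $S^3$ and leaves $S^3-V$ (resp. $S^3-V'$) unchanged; as $S^3-V$ is a nontrivial knot exterior and $S^3$ is irreducible and atoroidal, the surgered copies $\tilde V$ of $V$ and $\tilde V'$ of $V'$ are again solid tori, $\tilde V'$ unknotted, and $f$ induces a homeomorphism of pairs $(\tilde V',K'(q))\cong(\tilde V,K(q))$. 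Because $L$ bounds $D$ inside a ball in ${\rm int}(V)$, the surgery does not re-frame $\partial V$, so $K(q)$ is the satellite with pattern $K'(q)$ and the same (nontrivial) companion knot as $K$; combined with $K(q)\cong K$ this forces $K'(q)\cong K'$. Finally, any nugatory disk for $L'$ can be isotoped into ${\rm int}(V')$ by an innermost-circle argument on its intersection with $\partial V'$, and then carried by $f$ to a nugatory disk for $L$; since $L$ is not nugatory, neither is $L'$. Thus $K'$ has an order-$q$ cosmetic generalized crossing change, as desired.

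\textbf{The main obstacle.}
Everything hinges on ruling out case (B). There one has an essential annulus (or torus) in the exterior of $K\cup L$ meeting $L$, and one must show this is incompatible with $(-1/q)$-surgery on $L$, $|q|\ge 6$, yielding a knot isotopic to $K$. I expect this to require the substantive $3$-manifold input: twisting $|q|$ times along the essential surface and comparing the essential tori in the exterior of $K(q)$ with those in the exterior of $K$ by means of a result controlling essential surfaces under $1/q$-surgery whose hypothesis is precisely $|q|\ge 6$ (the large-twist regime), thereby contradicting $K(q)\cong K$ unless $L$ could in fact be isotoped off $T$. Granting this, case (A) applies and the theorem follows.
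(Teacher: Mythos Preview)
Your case (A) contains a genuine gap at the sentence ``combined with $K(q)\cong K$ this forces $K'(q)\cong K'$.'' Knowing that $K$ and $K(q)$ are both satellites with the \emph{same} companion torus $T$ does not by itself imply that their patterns are isotopic: the ambient isotopy $\psi\colon S^3\to S^3$ carrying $K(q)$ to $K$ need not send $V$ to itself, and in general $\psi(T)$ may be a different essential torus in $M_K$, so the pattern read off from $\psi(V)$ need not be $K'$. The paper addresses exactly this point: it chooses $T$ to be \emph{innermost} with respect to $K$ in a Haken system for $M_{K\cup L}$, verifies that $W=\overline{V-\eta(K\cup L)}$ is atoroidal, and then applies Motegi's lemma to straighten $\psi$ to an isotopy $\Phi$ with $\Phi(V)=V$ (the atoroidal condition rules out the ``knotted 3-ball'' alternative in Motegi's dichotomy). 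One must further check that $T$ and $\Phi(T)$, once made disjoint, are actually parallel in $M_K$; this uses that both are innermost, which in turn requires that the crossing change produced no \emph{new} essential torus between $T$ and $K(q)$, i.e.\ that $M(q)$ contains no Type~2 tori. That last fact is exactly where $|q|\ge 6$ enters, via Gordon's bound $\Delta\le 5$ on boundary slopes of punctured tori. So the hypothesis $|q|\ge 6$ is needed inside your case~(A), not only to dispose of case~(B), and your localized argument as written does not use it at all.

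Your nugatory step, by contrast, can be completed along the lines you indicate: after removing circles of $D''\cap\partial V'$ that are inessential on $\partial V'$, any surviving circle must be a longitude of the unknotted $V'$, and the outermost one would cobound with $L'$ an annulus in $V'$, making $L'$ homologous to a longitude in $H_1(V')$; but $L'$ bounds the crossing disk $D'\subset V'$ and is therefore null-homologous there, a contradiction. The paper takes a different route here, using the map $h=f^{-1}\circ\Phi\circ f$ already constructed and McCullough's theorem on homeomorphisms restricting to boundary Dehn twists to force $L'$ to bound a disk in $V'-\eta(K')$. Your shortcut at this one point is fine, but it does not repair the missing control of $\Phi$ above.
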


This leads to the following two corollaries with regards to cosmetic generalized crossing changes.

\begin{corone} 
Suppose $K'$ is a torus knot.  Then no prime satellite knot with pattern $K'$ admits an order-$q$ cosmetic generalized crossing change with $|q| \geq 6$.
\end{corone}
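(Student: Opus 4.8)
The plan is to argue by contradiction, combining the Main Theorem with Kalfagianni's theorem \cite{kalf} that no fibered knot admits a cosmetic generalized crossing change of any order. Suppose $K$ is a prime satellite knot whose pattern is a torus knot $K'$ and that $K$ admits an order-$q$ cosmetic generalized crossing change with $|q|\geq 6$. By the Main Theorem, some pattern knot $P$ of $K$ also admits an order-$q$ cosmetic generalized crossing change. Torus knots are fibered, so if $P$ is itself a torus knot we are done, since \cite{kalf} forbids such a crossing change on $P$. The content of the argument is therefore to rule out the possibility that the Main Theorem produces a pattern $P$ that is not visibly fibered.

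To handle this I would induct on the number $t(K)$ of tori in the JSJ decomposition of the exterior of $K$, with the following structural claim as the inductive input: if $K$ is a prime satellite knot possessing a torus-knot pattern, then every pattern knot of $K$ is either a torus knot or is again a prime satellite knot possessing a torus-knot pattern whose exterior has strictly fewer JSJ tori. Granting this, the base case is the situation in which $K$ has a single companion torus, so its only pattern is the torus knot $K'$; the Main Theorem then forces $P=K'$, contradicting \cite{kalf}. In the inductive step, $P$ is either a torus knot, handled directly by \cite{kalf}, or a prime satellite with a torus-knot pattern and strictly smaller JSJ complexity, in which case the induction hypothesis says $P$ admits no order-$q$ cosmetic generalized crossing change, contradicting what the Main Theorem produced. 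Either way $K$ could not have admitted the crossing change. To prove the structural claim, I would show that a companion torus of $K$ is, up to isotopy, either an outermost one --- with respect to which $K$ is a cable knot and the pattern is the torus knot, once one checks that the pattern solid torus of a torus-knot pattern is standard --- or a torus that can be isotoped into the companion solid torus; filling the pattern solid torus associated to the latter trivially compresses the outermost companion torus, so the JSJ count strictly drops, while the standard pattern solid torus (hence a torus-knot pattern) survives inside the new pattern.

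The step I expect to be the main obstacle is exactly this control of \emph{all} patterns of $K$, rather than only the obvious one. The Main Theorem asserts merely that \emph{some} pattern knot carries the crossing change, and a prime satellite knot can have several non-isotopic companion tori --- for instance whenever its companion is itself a satellite knot --- so one genuinely must verify that the presence of one torus-knot pattern propagates to a usable statement about every companion torus. The remaining points (primeness of the intermediate cable patterns, the strict decrease of the JSJ count under trivial filling, and the winding-number argument identifying the pattern solid torus of a torus-knot pattern as standard) should be routine. Finally, the hypothesis $|q|\geq 6$ is used only to invoke the Main Theorem; Kalfagianni's obstruction does not depend on the order, so no further Dehn-surgery input is needed.
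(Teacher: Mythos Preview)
Your proposal takes a genuinely different route from the paper, and the detour is unnecessary. The paper does \emph{not} invoke Theorem~\ref{thm:main} as a black box. Instead it observes that the companion torus $T$ whose associated pattern is the given torus knot $K'$ is automatically innermost with respect to $K$: torus knots are not satellite knots, so the region between $T$ and $\partial\eta(K)$ is atoroidal. One checks (via Lemma~\ref{lem:type2orq<6}) that $T$ is Type~1 and then that it is essential in $M_{K\cup L}$, so $T$ can serve as the innermost torus in the proof of Theorem~\ref{thm:main}. Running that proof with this specific $T$ produces an order-$q$ cosmetic crossing change on the specific pattern $K'$, not merely on ``some'' pattern, and Kalfagianni's theorem finishes the argument. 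In short, the paper dissolves your ``main obstacle'' not by controlling all patterns of $K$, but by noting that the proof of the Main Theorem already lets one choose which innermost companion torus to use.

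Your inductive strategy might be made to work, but the sketch has real soft spots beyond what you flag as routine. First, your terminology is inverted: the companion torus carrying the torus-knot pattern is \emph{innermost}, not outermost, and any other companion torus lies \emph{outside} it; this also means a torus-knot pattern does not force $K$ to be a cable knot. Second, and more seriously, your structural claim requires that when you pass to the pattern $P$ with respect to an outer companion torus $T_1$, the image $f_1^{-1}(T_0)$ of the innermost torus remains a companion torus for $P$---equivalently, that the solid torus $f_1^{-1}(V_0)$ is still knotted in $S^3$. This is not automatic: the unknotting homeomorphism $f_1\colon V'\to V_1$ need not preserve knottedness of sub-solid-tori, and if $f_1^{-1}(V_0)$ happens to be unknotted you must argue separately that $P$ is then itself a torus knot, which amounts to controlling the framing of the homeomorphism $(f_1^{-1}(V_0),P)\cong(V_0,K)\cong(V',K')$. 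You also need primeness of $P$ to iterate. None of this is obviously false, but it is precisely the content you label ``routine,'' and the paper's direct approach sidesteps all of it in a few lines.
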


\begin{cormain}
If there exists a knot admitting a cosmetic generalized crossing change of order $q$ with $|q|\geq 6$, then there must be such a knot which is hyperbolic.
\end{cormain}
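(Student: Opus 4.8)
The plan is to argue by minimal counterexample, using the geometrization of knot complements to split into three cases and Theorem~\ref{thm:main} to descend from a satellite to one of its patterns. First recall that, by Kalfagianni's theorem \cite{kalf}, no fibered knot admits a cosmetic generalized crossing change of any order; in particular neither the unknot nor any torus knot is a knot of the kind we are considering. Now suppose, for contradiction, that some knot admits a cosmetic generalized crossing change of order $q$ with $|q|\ge 6$; among all such knots choose one, $K$, for which a suitable complexity $c(M_K)$ of the torus decomposition of the exterior $M_K=\overline{S^3-\eta(K)}$ is as small as possible---this is legitimate because $c$ will take values in a well-ordered set. By geometrization, $K$ is a torus knot, a hyperbolic knot, or a satellite knot. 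It is not a torus knot, by the remark just made, and if it is hyperbolic there is nothing to prove, so it suffices to rule out the case that $K$ is a satellite.

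Assume then that $K$ is a satellite knot with companion torus $T$, companion solid torus $V\supset K$, and (nontrivial) companion knot $C$ equal to the core of $V$; fix a homeomorphism $f\colon(V',K')\to(V,K)$ with $V'\subset S^3$ unknotted, so that $K'$ is a pattern knot for $K$. By Theorem~\ref{thm:main} we may take $f$ so that $K'$ too admits an order-$q$ cosmetic generalized crossing change. Since the hypothesis $|q|\ge 6$ is preserved, $K'$ is again a knot of the kind under consideration; moreover $K'$ is not the unknot, the unknot being fibered. Hence by minimality of $K$ we have $c(M_{K'})\ge c(M_K)$, and the heart of the argument is to contradict this by proving $c(M_{K'})<c(M_K)$, i.e.\ that a pattern knot is strictly simpler than the satellite it sits inside.

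To compare the exteriors, let $W'=\overline{S^3-V'}$, a solid torus because $V'$ is unknotted; then
\[
M_{K'}=\bigl(V'-\eta(K')\bigr)\cup_{\partial V'}W'\;\cong\;\bigl(V-\eta(K)\bigr)\cup_{T}W',
\]
so $M_{K'}$ is obtained from $M_K=\bigl(V-\eta(K)\bigr)\cup_{T}M_C$, where $M_C=\overline{S^3-V}$, by deleting the companion exterior $M_C$ and Dehn filling the companion torus $T$ along the meridian of $W'$. A natural choice of complexity is the Gromov norm $\|M_K\|$, refined lexicographically by the number of Seifert-fibered pieces in the JSJ decomposition. Additivity of $\|\cdot\|$ over the incompressible torus $T$ gives $\|M_K\|=\|V-\eta(K)\|+\|M_C\|$; the fact that Gromov norm does not increase under Dehn filling gives $\|M_{K'}\|\le\|V-\eta(K)\|\le\|M_K\|$; and since $C$ is nontrivial, $M_C$ is not a solid torus, so its JSJ decomposition contains a piece that is either hyperbolic---forcing $\|M_C\|>0$ and hence $\|M_{K'}\|<\|M_K\|$---or Seifert fibered, in which case one wants to conclude that the number of Seifert-fibered pieces strictly drops. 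The main obstacle is precisely this last monotonicity claim: a Dehn filling can create new essential tori, so one must verify that the chosen complexity genuinely decreases rather than merely redistributing, either by a careful analysis of the refined Gromov norm under the relevant filling or by appealing to the known fact that the companionship relation on knots in $S^3$ is well-founded, so that the descending chain $K\rightsquigarrow K'\rightsquigarrow\cdots$ must terminate---and by geometrization together with \cite{kalf} it can only terminate at a hyperbolic knot. Assembling these pieces then yields the desired hyperbolic example.
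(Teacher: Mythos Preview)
Your high-level strategy coincides with the paper's: invoke geometrization, eliminate torus knots via Kalfagianni's theorem \cite{kalf}, and use Theorem~\ref{thm:main} to pass from a satellite to a pattern with the same property, iterating until a non-satellite (hence hyperbolic) knot appears. The paper does exactly this, routing through Corollary~\ref{cor:sat}, and handles termination in one phrase---``applying Theorem~\ref{thm:main}, repeatedly if necessary''---implicitly using that the satellite/companionship relation on knots in $S^3$ has no infinite descending chains (a consequence of Haken finiteness for the JSJ tori in $M_K$).

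Where your write-up diverges is in the attempt to make termination quantitative via a Gromov-norm complexity, and this detour is both unnecessary and, as you candidly admit, incomplete. The additivity step $\|M_K\|=\|V-\eta(K)\|+\|M_C\|$ and the monotonicity under filling are fine, but when $M_C$ is a graph manifold you fall back on counting Seifert-fibered JSJ pieces, and that count need not strictly drop under the relevant Dehn filling---new essential tori can appear, as you note. Your closing sentence, appealing directly to the well-foundedness of companionship, is the correct and sufficient argument; once you use it, the entire Gromov-norm apparatus (and the minimal-counterexample framing) becomes superfluous. In short: right idea, same route as the paper, but strip out the simplicial-volume machinery and state the termination as the paper does.
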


Thus we have reduced Problem \ref{q:gncc} to the cases where either the knot is hyperbolic or the crossing change has order $q$ with $|q| < 6$.

%This paper is organized as follows...

\begin{ack}{\rm
The author would like to thank her advisor, Effie Kalfagianni, for her help with this project.}
\end{ack}

\section{Preliminaries}%\label{s:background}
Given a 3-manifold $N$ and submanifold $F \subset N$ of co-dimension 1 or 2, $\eta (F)$ will denote a closed regular neighborhood of $F$ in $N$.  If $N$ is a 3-manifold containing a surface $\Sigma$, then by \emph{$N$ cut along $\Sigma$} we mean $\overline{N - \eta(\Sigma)}$.  For a knot or link $\L \subset S^3$, we define $M_{\L} = \overline{S^3 - \eta(\L)}$.  

Given a knot $K$ with a crossing circle $L$, let $K_L(q)$ denote the knot obtained via an order-$q$ generalized crossing change at $L$.  We may simply write $K(q)$ for $K_L (q)$ when there is no danger of confusion about the crossing circle in question.

\begin{lem}\label{lem:irred} Let $K$ be an oriented knot with a crossing circle $L$.  If $L$ is not nugatory, then $M_{K \cup L}$ is irreducible. 
\end{lem}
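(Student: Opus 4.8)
The plan is to argue by contradiction: suppose $M_{K\cup L}$ is reducible, so there is an essential $2$-sphere $S\subset M_{K\cup L}$, and derive that $L$ must bound an embedded disk in $S^3-\eta(K)$, contradicting the hypothesis that $L$ is not nugatory. First I would recall that $S^3$ itself is irreducible, so the $2$-sphere $S$ bounds a $3$-ball $B$ in $S^3$; the point is that $B$ must contain at least one component of $\eta(K)\cup\eta(L)$, for otherwise $S$ would bound a ball already inside $M_{K\cup L}$ and hence be inessential there. So I would analyze which of the two solid tori $\eta(K)$, $\eta(L)$ lie inside $B$. Since $K$ is connected and $S\cap K=\emptyset$, either all of $\eta(K)$ is in $B$ or none of it is; likewise for $\eta(L)$. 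This leaves three cases.

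The main case is when $B$ contains exactly one of the two neighborhoods. If $B$ contains $\eta(L)$ but not $\eta(K)$, then $S$ is a sphere in $S^3-\eta(K)$ separating $L$ from $K$, so $L$ lies in a ball disjoint from $K$; but $L$ is an unknotted circle (it is the boundary of a crossing disk), hence it bounds a disk inside that ball, and that disk lies in $S^3-\eta(K)$, making $L$ nugatory — contradiction. If instead $B$ contains $\eta(K)$ but not $\eta(L)$, I would use the defining property of a crossing disk: $D$ meets $K$ in two points with zero algebraic intersection number, and $\partial D=L$ is disjoint from $S$. An innermost-disk / standard cut-and-paste argument on $S\cap D$ shows $S$ can be isotoped off $D$, so $S$ lies in $S^3-\eta(L)$ cut along $D$; since $L=\partial D$ is then on the opposite side of $S$ from $K$, the component of the complement of $S$ not containing $K$ is a ball meeting $D$ in a sub-disk cobounded with $L$, again forcing $L$ to bound a disk in $S^3-\eta(K)$. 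The remaining case, $B$ containing both $\eta(K)$ and $\eta(L)$, is symmetric: then $S^3-B$ is a ball in $M_{K\cup L}$ bounded by $S$, so $S$ is inessential, contradicting our choice of $S$.

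The step I expect to be the main obstacle is the case where $B\supset\eta(K)$ but $B\not\supset\eta(L)$: there one really has to use that $D$ is a \emph{crossing} disk rather than an arbitrary spanning surface, and one must carefully run the innermost-circle argument on $S\cap D$ to remove intersections, keeping track of the fact that the two points $K\cap D$ are fixed throughout so that the final sub-disk of $D$ bounded by $L$ genuinely avoids $K$. Once the intersections of $S$ with $D$ are cleared, concluding that $L$ is nugatory is immediate, and all three cases close up. Since every case produces either the inessentiality of $S$ or the nugatoriness of $L$, the hypothesis that $L$ is not nugatory forces $M_{K\cup L}$ to be irreducible.
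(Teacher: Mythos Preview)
Your overall strategy---contrapositive, take an essential sphere $S$, use irreducibility of $S^3$ to bound a ball, then split into cases according to which of $\eta(K),\eta(L)$ lie in that ball---is exactly the paper's approach, and your handling of the cases ``both in $B$'', ``neither in $B$'', and ``$\eta(L)\subset B$, $\eta(K)\not\subset B$'' is correct.

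There is, however, a genuine gap in your treatment of the case $\eta(K)\subset B$, $\eta(L)\not\subset B$. The claim that an innermost-circle argument on $S\cap D$ lets you isotope $S$ off $D$ is false: the crossing disk $D$ has its boundary $L$ on one side of $S$ and its two intersection points with $K$ on the other side, so $D$ must meet $S$ after any isotopy of $S$ in $M_{K\cup L}$. The cut-and-paste you sketch cannot remove these intersections, because the innermost sub-disks of $D$ that you would use to compress or push $S$ may contain the punctures $D\cap K$ and hence fail to lie in $M_{K\cup L}$.

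The fix is much simpler than what you attempt, and it is what the paper does implicitly: in $S^3$ every embedded $2$-sphere bounds a $3$-ball on \emph{each} side (Schoenflies), so the complementary region $B'=\overline{S^3\setminus B}$ is also a $3$-ball. Thus the case $\eta(K)\subset B$, $\eta(L)\not\subset B$ is identical to the case $\eta(L)\subset B'$, $\eta(K)\not\subset B'$, and your own argument for that case (``$L$ is unknotted, hence bounds a disk in the ball disjoint from $K$'') finishes it immediately. Once you make this observation, your case analysis collapses to the paper's one-line statement that $S$ must separate $K$ from $L$, so $L$ lies in a $3$-ball disjoint from $K$ and is therefore nugatory.
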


\begin{proof}  We will prove the contrapositive.  Suppose $M_{K \cup L}$ is reducible.  Then $M_{K \cup L}$ contains a separating 2-sphere $S$ which does not bound a 3-ball $B \subset M_{K \cup L}$.  So $S$ must separate $K$ and $L$ in $S^3$, and consequently, $L \subset S^3$ lies in a 3-ball disjoint from $K$.  Since $L$ is unknotted, $L$ bounds a disc in this 3-ball which is in the complement of $K$, and hence $L$ is nugatory.
\end{proof}

Recall that a  knot  $K$ is called \emph{algebraically slice} if it admits a Seifert surface $S$ such that the Seifert form $\theta: H_1(S) \times H_1(S) \to \Z$ vanishes on a half-dimensional summand of $H_1(S)$.  In particular, if $K$ is algebraically slice, then the Alexander polynomial $\Delta_K(t)$ is of the form $\Delta_K(t) \doteq f(t) f(t^{-1})$, where $f(t)\in \Z[t]$ is a linear polynomial and $\doteq$ denotes equality up to multiplication by a unit in ${\Z}[t, t^{-1}]$.  (See \cite{lick} for more details.)  With this in mind, we have the following lemma from \cite{balm}, which we will need in the proof of Corollary \ref{cor:sat}.

\begin{lem}\label{lem:g1}
Suppose $K$ is a genus-one knot which admits a cosmetic generalized crossing change of any order $q \in \Z - \{ 0 \}$. Then $K$ is algebraically slice.
\end{lem}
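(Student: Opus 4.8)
The plan is to recall and adapt the Seifert-form argument of \cite{balm}. Since the unknot admits no cosmetic crossing change \cite{schar-thom}, the knot $K$ has Seifert genus exactly one; I would fix a minimal-genus Seifert surface $S$ for $K$, so that $H_1(S)\cong\Z^2$, and write the given order-$q$ cosmetic crossing change as $(-1/q)$-surgery on a crossing circle $L=\partial D$, where the crossing disk $D$ meets $K$ transversally in two interior points. The first step is structural: after an isotopy of $L$ in $M_K$, arrange that $D\cap S$ is a single properly embedded arc $\alpha\subset S$ with $\partial\alpha=D\cap K$ (so in particular $L\cap S=\emptyset$). One obtains this by putting $D$ and $S$ in transverse position and then removing the remaining components of $D\cap S$: closed curves are killed using the incompressibility of $S$ together with the irreducibility of $M_{K\cup L}$ from Lemma \ref{lem:irred}, and superfluous arcs by outermost-disk isotopies of $D$ across $S$, again using Lemma \ref{lem:irred}. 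I expect this bookkeeping --- and the verification that the only alternative to the desired conclusion forces $L$ to be nugatory --- to be the most delicate part of the argument.

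Next I would handle the degenerate case. On a genus-one surface with connected boundary a properly embedded arc is either $\partial$-parallel or non-separating. If $\alpha$ were $\partial$-parallel it would cut a disk $\delta\subset S$ off of $S$ with $\partial\delta=\alpha\cup\beta$, $\beta\subset K$; pushing the sub-arc $\beta$ of $K$ across $\delta$ is an isotopy of $K$ supported near $\delta\subset S$, hence missing $L$, and it removes both points of $K\cap D$, so $L$ would bound a disk in the complement of $K$ and the crossing would be nugatory --- contrary to hypothesis. Hence $\alpha$ is non-separating, and so the vector $w=(w_1,w_2)^{T}\in\Z^2$ whose entries are the algebraic intersection numbers in $S$ of basis curves $e_1,e_2$ of $H_1(S)$ with $\alpha$ is primitive.

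The order-$q$ generalized crossing change is $q$-fold twisting along the unknot $L$, so $S$ is carried to an embedded genus-one Seifert surface $S(q)$ for $K(q)$ with $H_1(S(q))$ canonically identified with $H_1(S)=\Z^2$. Since $L$ misses $S$ (hence a neighborhood of $S$) we have $\mathrm{lk}(e_i,L)=w_i$, and the standard formula for how linking numbers change under twisting yields, for the corresponding Seifert matrices,
\[
  V(q)=V+q\,ww^{T}.
\]
As the crossing change is cosmetic, $K(q)$ is isotopic to $K$, whence $\Delta_{K(q)}\doteq\Delta_K$; comparing the polynomials $\det(V-tV^{T})$ and $\det(V(q)-tV(q)^{T})$, which both represent $\Delta_K$ and both are genuine polynomials of $t$-span two, at $t=1$ --- where each evaluates to $\det(V-V^{T})=\det(V(q)-V(q)^{T})=1$, the determinant of the intersection form of $S$ --- removes the unit ambiguity and gives $\det(V-tV^{T})=\det(V(q)-tV(q)^{T})$. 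Writing $A=V-tV^{T}$ and using $V(q)-tV(q)^{T}=A+(1-t)q\,ww^{T}$, the rank-one determinant identity $\det(A+B)=\det A+\mathrm{tr}(\mathrm{adj}(A)B)$ for $2\times2$ matrices gives $(1-t)\,q\,w^{T}\mathrm{adj}(A)\,w=0$, and since $\Z[t,t^{-1}]$ is a domain and $(1-t)q\neq0$ we conclude $w^{T}\mathrm{adj}(A)\,w=0$. Writing $V=\left(\begin{smallmatrix}p&r\\ s&u\end{smallmatrix}\right)$, a short computation turns this into $u w_1^{2}-(r+s)w_1w_2+p w_2^{2}=0$; equivalently, the integral binary quadratic form $(x,y)\mapsto px^{2}+(r+s)xy+uy^{2}$ associated to the Seifert form of $S$ vanishes at $v_0=(-w_2,w_1)$.

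Finally, $w$ primitive forces $v_0$ primitive, so $\Z v_0$ is a rank-one (hence half-dimensional) direct summand of $H_1(S)=\Z^2$, and $\theta(v_0,v_0)=v_0^{T}Vv_0=0$ shows the Seifert form $\theta$ of $S$ vanishes on it. Hence $K$ is algebraically slice, as claimed. Note that the order $q$ enters only through $q\neq0$, so the argument is uniform in $q\in\Z-\{0\}$.
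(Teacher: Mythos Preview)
Your argument is correct and is precisely the adaptation of the \cite{balm} Seifert-form computation that the paper's two-line proof instructs the reader to carry out; the only change for general $q$ is that $V(q)=V+q\,ww^{T}$ rather than $V\pm ww^{T}$, and since $q$ enters the key identity only as a nonzero scalar factor, the conclusion $w^{T}\mathrm{adj}(V)w=0$ (equivalently $\theta(v_0,v_0)=0$) follows exactly as before. Two small points worth tightening: the clean way to obtain $D\cap S=\alpha$ is to take $S$ of minimal genus in $M_L$ and invoke Gabai to see it is minimal in $S^3$, rather than isotoping $D$ against a surface chosen in $S^3$; and your normalization of the Alexander polynomial tacitly assumes $\det V\neq 0$ (so the span is two), though the degenerate case $\det V=0$ is easily handled separately, or avoided entirely by comparing $\det V(q)=\det V+q\,w^{T}\mathrm{adj}(V)w$ directly.
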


\begin{proof}
For $q = \pm 1$, this is Theorem 1.1(1) of \cite{balm}.  The proof given there is easily adapted to generalized crossing changes of any order.
\end{proof}

Fix a knot $K$ and let $L$ be a crossing circle for $K$.  Let $M(q)$ denote the 3-manifold obtained from $M_{K \cup L}$ via a Dehn filling of slope $(-1/q)$ along $\partial \eta (L)$.  So, for $q \in \Z - \{ 0 \},\ M(q) = M_{K(q)}$, and $M(0) = M_K$.  We will sometimes use $K(0)$ to denote $K \subset S^3$ when we want to be clear that we are considering $K \subset S^3$ rather than $K \subset M_L$.

Suppose there is some $q \in \Z$ for which $K_L (q)$ is a satellite knot.  Then there is a companion torus $T$ for $K(q)$ and, by definition, $T$ must be essential in $M(q)$.  This essential torus $T$ must occur in one of the following two ways.

\begin{defn}{\rm
Let $T \subset M(q)$ be an essential torus.  We say $T$ is \emph{Type 1} if $T$ can be isotoped into $M_{K \cup L} \subset M(q)$.  Otherwise, we say $T$ is \emph{Type 2}.  If $T$ is Type 2, then $T$ is the image of a punctured torus $(P, \partial P) \subset (M_{K \cup L}, \partial \eta (L))$ and each component of $\partial P$ has slope $(-1/q)$ on $\partial \eta (L)$.}
\end{defn} 

In general, let $\L$ be any knot or link in $S^3$ and let $\Sigma$ be a boundary component of $M_{\L}$.  If $(P, \partial P) \subset (M_{\L}, \Sigma)$ is a punctured torus, then every component of $\partial P$ has the same slope on $\Sigma$, which we call the \emph{boundary slope} of $P$.  

Suppose $C_1$ and $C_2$ are two non-separating simple closed curves (or boundary slopes) on a torus $\Sigma$.  Let $s_i$ be the slope of $C_i$ on $\Sigma$, and let $[C_i]$ denote the isotopy class of $C_i$ for $i=1,2$.  Then $\Delta (s_1, s_2)$ is the minimal geometric intersection number of $[C_1]$ and $[C_2]$. It is known that if $s_i$ is the rational slope $(1/q_i)$ for some $q_i \in \Z$ for $i=1,2$, then $\Delta (s_1, s_2) = |q_1 - q_2|$.   (See \cite{gordon} for more details.)  Note that we consider $\infty = (1/0)$ to be a rational slope.

Gordon \cite{gordon} proved the following theorem relating the boundary slopes of punctured tori in link complements.  In fact, Gordon proved a more general result, but we state the theorem here only for the case which we will need later in Section \ref{s:pfs}. 

\begin{thm}[Gordon, Theorem 1.1 of \cite{gordon}]\label{thm:gordon}
Let $\L$ be a knot or link in $S^3$ and let $\Sigma$ be a boundary component of $M_{\L}$.  Suppose $(P_1, \partial P_1)$ and $(P_2, \partial P_2)$ are punctured tori in $(M_{\L}, \Sigma)$ such that the boundary slope of $P_i$ on $\Sigma$ is $s_i$ for $i=1,2$.  Then $\Delta (s_1, s_2) \leq 5$.
\end{thm}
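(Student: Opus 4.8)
The plan is to run the combinatorial machinery for intersecting essential surfaces in Dehn fillings developed by Gordon and Luecke; the statement is precisely Gordon's Theorem 1.1, so in the paper I would simply cite \cite{gordon}, but the underlying strategy is the following. After an isotopy we may assume $P_1$ and $P_2$ meet transversally with $|P_1 \cap P_2|$ minimal, and (in the case of interest $M_{\L}$ is irreducible by Lemma \ref{lem:irred}, which licenses the usual surgery arguments) that $P_1 \cap P_2$ has no circle bounding a disk on either surface and no $\partial$-parallel arc, and that each $P_i$ is essential. Set $\Delta := \Delta(s_1,s_2)$ and $p_i := |\partial P_i|$. Capping off every boundary component of $P_i$ with a meridian disk of the Dehn filling solid torus of slope $s_i$ produces a torus $\hat{P}_i$, and the arc components of $P_1 \cap P_2$ together with these capping disks define a pair of graphs $G_i \subset \hat{P}_i$ whose fat vertices are the $p_i$ capping disks and whose edges are the arcs. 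A count of boundary intersection points shows every vertex of $G_i$ has valence $\Delta\, p_j$ with $\{i,j\} = \{1,2\}$, hence $|E(G_i)| = \tfrac12 \Delta\, p_1 p_2$; and since $\chi(\hat{P}_i) = 0$ the graph $G_i$ has $\tfrac12 \Delta\, p_1 p_2 - p_i$ faces.

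Next I would invoke the standard structural tools of this theory: the parity rule relating the two labels of an edge of $G_1$ to those in $G_2$; the ban on edges bounding trivial loops and the bound on the number of mutually parallel edges (both coming from essentiality of the $P_i$ and from irreducibility of the filled manifolds, which are submanifolds of $S^3$ and so admit no lens-space or reducible summand); and the resulting production, at a vertex of extremal valence, of a \emph{Scharlemann cycle} in one of the graphs. An Euler-characteristic count on the torus $\hat{P}_i$ — a closed surface with $\chi = 0$ cannot carry a graph with all the bigon and near-bigon faces that a large valence forces — shows that once $\Delta \geq 6$ the pair $G_1, G_2$ is pushed into a configuration containing a forbidden object: a Scharlemann cycle yielding a reducible or lens-space Dehn filling, or a pair of Scharlemann cycles, one on each graph, incompatible with both filled manifolds embedding in $S^3$. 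This contradiction forces $\Delta \leq 5$.

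I expect the main obstacle to be the case analysis on the pair $(p_1, p_2)$. The generic Euler-characteristic estimate is clean when both $p_i$ are reasonably large, but the low-complexity cases — above all $p_1 = 1$ or $p_2 = 1$, where some $G_i$ is a graph on a torus with a single vertex of valence $\Delta$, and the case $p_1 = p_2 = 1$ — must be dispatched by hand, enumerating the finitely many graph types that can occur on the torus and eliminating each one or extracting $\Delta \le 5$ directly. This bookkeeping, together with certifying that every surviving $\Delta \geq 6$ configuration really does contain a Scharlemann cycle of length forbidden by the ambient $S^3$, is the technical heart of Gordon's paper and the reason its proof is long; for the present application it is enough to quote \cite{gordon}.
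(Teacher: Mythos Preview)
Your proposal is correct: in the paper this theorem is simply quoted from \cite{gordon} with no proof given, and you rightly note that a citation suffices here. The sketch you supply of Gordon's intersection-graph and Scharlemann-cycle argument is an accurate high-level summary of the method in \cite{gordon}, though the paper itself does not attempt even this much.
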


Now suppose $K$ is a knot contained in a solid torus $V \subset S^3$.  An embedded disk $D \subset V$ is called a \emph{meridian disk} of $V$ if $\partial D = D \cap \partial V$ is a meridian of $\partial V$.  We call $K$ \emph{geometrically essential} (or simply \emph{essential}) in $V$ if every meridian disk of $V$ meets $K$ at least once.   With this in mind, we have the following lemma of Kalfagianni and Lin \cite{kalf-lin}. 

\begin{lem}[Lemma 4.6 of \cite{kalf-lin}]\label{lem:LinV}
Let $V \subset S^3$ be a knotted solid torus such that $K \subset \rm{int}(V)$ is a knot which is essential in $V$ and $K$ has a crossing disk $D$ with $D \subset \rm{int}(V)$.  If $K$ is isotopic to $K(q)$ in $S^3$, then $K(q)$ is also essential in $V$.  Further, if $K$ is not the core of $V$, then $K(q)$ is also not the core of $V$.
\end{lem}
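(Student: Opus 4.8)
The plan is to recast essentiality of a knot in $V$ as incompressibility of the torus $T=\partial V$, and then to follow $T$ through the twisting operation. First, $K(q)\subset\operatorname{int}(V)$: the $(-1/q)$-surgery on $L=\partial D$ is supported in a ball neighbourhood $\eta(D)\subset\operatorname{int}(V)$, and $K$ and $K(q)$ agree outside $\eta(D)$. Since $V$ is knotted, $W:=\overline{S^{3}-V}$ is the exterior of a nontrivial knot, so $T$ is incompressible in $W$; hence, for any knot $J\subset\operatorname{int}(V)$, the torus $T$ is incompressible in $M_{J}$ if and only if $J$ is essential in $V$, and $T$ is boundary-parallel in $M_{J}$ if and only if $J$ is isotopic to the core of $V$. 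Thus it suffices to show that $T$ stays incompressible in $M_{K(q)}$ and, for the last clause, not boundary-parallel. An easy case first: because $L$ bounds $D$ inside $V$ and $K=K(q)$ off $\eta(D)$, we have $[K(q)]=[K]$ in $H_{1}(V)\cong\mathbb{Z}$, so $K$ and $K(q)$ have the same winding number $w$; if $w\neq 0$ then $K(q)$ is not null-homologous in $V$, hence lies in no $3$-ball in $V$, hence is essential. So the real work is when $w=0$.

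Assume for contradiction that $K(q)$ is inessential in $V$; fix a meridian disk $D_{0}$ of $V$ with $D_{0}\cap K(q)=\varnothing$ and with $|D_{0}\cap D|$ minimal among such disks. Since the twist along $L$ may be taken to be supported in $\eta(D)$ and to preserve $D$, the disk $D$ is again a crossing disk for $K(q)$, meeting $\operatorname{int}(D)$ in two points $p,p'$ of opposite sign. I would then run the standard innermost-disk/outermost-arc analysis of $D_{0}\cap D$: an innermost circle of $D_{0}\cap D$ on $D$ bounds a clean subdisk $\delta\subset D$; if $\delta$ misses $\{p,p'\}$, the usual surgery of $D_{0}$ across $\delta$ strictly decreases $|D_{0}\cap D|$ while preserving $D_{0}\cap K(q)=\varnothing$, contradicting minimality; if $\delta$ contains exactly one of $p,p'$, then $\delta$ together with the disk it cobounds on $D_{0}$ is a $2$-sphere meeting $K(q)$ exactly once, which is impossible; the remaining configurations, where $\delta$ contains both $p,p'$ or the component of $D_{0}\cap D$ is an outermost arc, are treated by the same sort of surgery, using once more that $p$ and $p'$ carry opposite signs. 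Once $D_{0}\cap D=\varnothing$, one can isotope $D_{0}$ off $\eta(D)$ entirely, and then, since $K=K(q)$ off $\eta(D)$, the resulting meridian disk of $V$ is disjoint from $K$ as well --- contradicting the essentiality of $K$ in $V$. Hence $K(q)$ is essential in $V$. (In the residual sub-case in which this simplification stalls because $K(q)$ is forced to be a nontrivial connected sum, I would instead invoke Gabai's results on the genus and essential tori of satellite knots, together with the equality $g(K)=g(K(q))$ coming from $K\simeq K(q)$.)

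For the last clause, suppose $K$ is not the core of $V$ but $K(q)$ is. Then $M_{K(q)}=\overline{S^{3}-\eta(\text{core of }V)}\cong W$. On the other hand $K\simeq K(q)$ gives $M_{K}\cong M_{K(q)}\cong W$, while $T=\partial V$ is an essential torus in $M_{K}$ (incompressible because $K$ is essential in $V$, not boundary-parallel because $K$ is not the core), cutting $M_{K}$ into $W$ and the nontrivial piece $V-\eta(K)$. So $M_{K}$ would properly contain an essential submanifold homeomorphic to $M_{K}$ itself, which is impossible by the finiteness and uniqueness of the JSJ (equivalently Haken) decomposition --- the complexity of $M_{K}$ would strictly exceed that of $W\cong M_{K}$. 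Hence $K(q)$ is not the core of $V$.

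The main obstacle is the middle step: proving that a generalized crossing change performed along a disk inside $V$ cannot free $K$ from $V$. One must use the algebraic-intersection-zero condition on the crossing disk essentially, keep careful track of intersections with $K(q)$ (rather than $K$) throughout the simplification of $D_{0}$, and address the possibility that $K(q)$ is a nontrivial connected sum, where the elementary argument has to be supplemented by the genus/sutured-manifold input of Gabai.
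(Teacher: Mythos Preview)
Your plan diverges from the paper's, and the gap is exactly where you flag the ``main obstacle.'' In the innermost-circle analysis, when the subdisk $\delta\subset D$ bounded by an innermost circle $c$ contains \emph{both} punctures $p,p'$, the move you propose fails: replacing the corresponding subdisk of $D_0$ by $\delta$ produces a meridian disk that now meets $K(q)$ in two points, so you lose the disjointness from $K(q)$ that the induction needs. That the two intersections have opposite sign only recovers the winding-number-zero information you already assumed; it does not let you cancel them geometrically. (One can rescue this case by noting that the annulus $D\setminus\operatorname{int}(\delta)$ glued to the subdisk of $D_0$ bounded by $c$ shows $L$ is null-homotopic off $K(q)$, and then invoking the Loop Theorem to make $L$ nugatory---but that is not the argument you gave, and it still needs $D_0\cap L=\varnothing$, which is its own issue.) The arc case is likewise not routine: since $\partial D_0\subset\partial V$ misses $D\subset\operatorname{int}(V)$, arcs of $D_0\cap D$ end at interior points $L\cap D_0$ of $D_0$, so there is no ``outermost arc on $D_0$'' in the usual sense and the standard boundary-compression does not apply. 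Finally, the residual obstruction is not accurately described as ``$K(q)$ is forced to be a nontrivial connected sum''; the two-puncture sphere one assembles may well give a trivial decomposition.

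The paper bypasses this combinatorics by working with Seifert surfaces from the outset---Gabai is the engine, not a fallback. Take a Seifert surface $S_1$ for $K$ of minimal genus in $M_L$, arranged so that $S_1\cap D$ is a single arc $\alpha$ joining the two punctures; twisting along $L$ yields a surface $S_2$ for $K(q)$ agreeing with $S_1$ outside a neighbourhood of $\alpha$. Since $M_{K\cup L}$ is irreducible, Gabai's Corollary~2.4 makes both $S_i$ genus-minimizing in $S^3$, hence incompressible. If $K(q)$ lies in a ball $B\subset V$ then the winding number is zero, so one may surger the $S_i$ along $\partial V$ to obtain minimal-genus surfaces $S_i'\subset\operatorname{int}(V)$; the incompressible $S_2'$ then isotopes into $B$, carrying $\alpha$ with it. Since $K$ and $K(q)$ agree outside a neighbourhood of $\alpha$, this forces $K$ into a ball as well, contradicting essentiality of $K$ in $V$. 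Your argument for the ``not the core'' clause via Haken complexity is correct and amounts to the paper's one-line remark that a satellite knot cannot be isotopic to the core of its companion solid torus.
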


\begin{proof}
Suppose, by way of contradiction, that $K(q)$ is not essential in $V$.  Then there is a 3-ball $B \subset V$ such that $K(q) \subset B$.  This means that the winding number of $K(q)$ in $V$ is 0.

Let $S_1$ be a Seifert surface for $K$ which is  of minimal genus in $M_L$, where $L = \partial D$.  We may isotope $S_1$ so that $S_1 \cap D$ consists of a single curve $\alpha$ connecting the two points of $K \cap D$.   Then twisting $S_1$ at $L$ via a $(-1/q)$-Dehn filling on $\partial \eta(L)$ gives rise to a Seifert surface $S_2$ for $K(q)$.  Since $M_{K \cup L}$ is irreducible by Lemma \ref{lem:irred}, we may apply Gabai's Corollary 2.4 of \cite{gabai} to see that $S_1$ and $S_2$ are minimal-genus Seifert surfaces in $S^3$ for $K$ and $K(q)$, respectively.  

Since the winding number of $K(q)$ in $V$ is 0, $S_1 \cap \partial V = S_2 \cap \partial V$ is homologically trivial in $\partial V$.  For $i=1,2$, we can surger $S_i$ along disks and annuli in $\partial V$ which are bounded by curves in $S_i \cap \partial V$ to get new minimal genus Seifert surfaces $S_i' \subset \rm{int}(V)$.  Then $S_2'$ is incompressible and $V$ is irreducible, so we can isotope $S_2'$ into $\rm{int}(B)$.  Hence $\alpha$ and therefore $D$ can also be isotoped into $\rm{int}(B)$.  But then $K$ must not be essential in $V$, which is a contradiction.

Finally, if $K$ is not the core of $V$, then $\partial V$ is a companion torus for the satellite knot $K$.  Since a satellite knot cannot be isotopic to the core of its companion torus, $K(q)$ cannot be the core of $V$.
\end{proof}

Given a compact, irreducible, orientable 3-manifold $N$, let $\T$ be a collection of disjointly embedded, pairwise non-parallel, essential tori in $N$, which we will call an \emph{essential torus collection} for $N$.  By Haken's Finiteness Theorem (Lemma 13.2 of \cite{hempel}) the number $$\tau(N) = \max \{ |\T|\ |\ \T {\rm\ is\ an\ essential\ torus\ collection\ for\ }N \}$$ is well-defined and finite, where $|\T|$ denotes the number of tori in $\T$.  We will call such a collection $\T$ with $|\T| = \tau(N)$ a \emph{Haken system} for $N$.  Note that any essential torus $T \subset N$ is part of some Haken system $\T$.

Before moving on the the proofs of Theorem \ref{thm:main} and its corollaries, we state the following results of Motegi \cite{motegi} (see also \cite{knoten}) and McCullough \cite{mccullough} which we will need in the next section.

\begin{lem}[Motegi, Lemma 2.3 of \cite{motegi}]\label{thm:motegi}
Let $K$ be a knot embedded in $S^3$ and let $V_1$ and $V_2$ be knotted solid tori in $S^3$ such that the embedding of $K$ is essential in $V_i$ for $i=1,2$.  Then there is an ambient isotopy $\phi: S^3 \to S^3$ leaving $K$ fixed such that one of the following holds.
\begin{enumerate}
\item $\partial V_1 \cap \phi (\partial V_2) = \emptyset$.
\item There exist meridian disks $D$ and $D'$ for both $V_1$ and $V_2$ such that some component of $V_1$ cut along $(D \sqcup D')$ is a knotted 3-ball in some component of $V_2$ cut along $(D \sqcup D')$.
\end{enumerate}
\end{lem}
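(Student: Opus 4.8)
The plan is to run the classical ``two companion tori in minimal position'' argument. Write $T_i=\partial V_i$. First I would note that each $T_i$ is incompressible in $M_K$: a compressing disk contained in $V_i$ would be a meridian disk of $V_i$ missing $K$, contradicting that $K$ is essential in $V_i$, while a compressing disk contained in $\overline{S^3-V_i}$ would make that exterior a solid torus, contradicting that $V_i$ is knotted. Since $M_K$ is irreducible and $K$ lies in $\mathrm{int}(V_1)\cap\mathrm{int}(V_2)$, I can choose an ambient isotopy $\phi$ of $S^3$ fixing $K$ pointwise so that $T_1$ and $\phi(T_2)$ are transverse with $|T_1\cap\phi(T_2)|$ minimal among all such positions; after renaming $\phi(T_2),\phi(V_2)$ as $T_2,V_2$, assume this position is attained. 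If $T_1\cap T_2=\emptyset$ we are in conclusion (1), so assume $T_1\cap T_2$ is a non-empty disjoint union of circles, all disjoint from $K$ since the $T_i$ are.

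Next I would do the standard innermost-disk cleanup. If some circle of $T_1\cap T_2$ bounds a disk on $T_1$, pick an innermost such disk $E\subset T_1$, so $\mathrm{int}(E)\cap T_2=\emptyset$ and $E\subset M_K$ because $E\subset T_1$. Then $\partial E\subset T_2$ is either essential on $T_2$, so that $E$ is a compressing disk for $T_2$ in $M_K$ — impossible — or inessential on $T_2$, in which case $E$ together with the disk it bounds on $T_2$ is a $2$-sphere bounding a ball in the irreducible manifold $M_K$, and pushing $T_2$ across that ball reduces $|T_1\cap T_2|$, contradicting minimality. The symmetric argument rules out circles bounding disks on $T_2$. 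Hence every circle of $T_1\cap T_2$ is essential on both tori, so these circles are mutually parallel on each $T_i$; say there are $n\ge 1$ of them, cutting $T_1$ into annuli $A_1,\dots,A_n$ and $T_2$ into annuli $B_1,\dots,B_n$.

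The heart of the argument is the analysis of these annuli. Each $B_k$ lies either in $V_1$ or in $\overline{S^3-V_1}$, and because the intersection circles are essential on $T_1$, at least one of them, say $B$, is a properly embedded annulus in $V_1$ that is disjoint from $K$ and incompressible in $V_1$ rel $K$ (a compressing disk disjoint from $K$ would, as above, compress $T_2$ in $M_K$); symmetrically there is such an annulus for $T_1$ inside $V_2$. I would then invoke the classification of incompressible annuli in a solid torus: $B$ is boundary-parallel in $V_1$, so together with one of the two annuli it co-bounds on $T_1$ it bounds a solid-torus region $R\subset V_1$. If $R\cap K=\emptyset$, pushing $B$ and a collar of it in $T_2$ across $R$ reduces $|T_1\cap T_2|$, contradicting minimality; hence $R$ must contain $K$. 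Applying this to every annulus of $T_2\cap V_1$ and of $T_1\cap V_2$ pins the common slope of the intersection circles down to the meridian of $V_1$ and simultaneously the meridian of $V_2$. Then any two intersection circles bound disks $D$ and $D'$ which, after isotoping their interiors off $T_1\cap T_2$ using irreducibility once more, are meridian disks of $V_1$ and of $V_2$ at once; the component of $V_1$ cut along $D\sqcup D'$ that meets $K$ is a $3$-ball contained in the corresponding component of $V_2$ cut along $D\sqcup D'$, and it is knotted there because it carries a knotted subarc of $K$. This is conclusion (2).

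The step I expect to be the main obstacle is precisely this annulus analysis: making the annuli incompressible rel $K$ (handling separately any compressing disks that are forced to meet $K$, which turn out to give meridian disks of the $V_i$ directly), and then showing that once all the ``free'' boundary-parallelisms have been exploited the only surviving configuration is the meridian/meridian one — this requires careful bookkeeping of which side of each torus every annulus and parallelism region lies on, and of whether those regions contain $K$. All of the reducing isotopies are supported away from $K$ by construction, so they can be taken to fix $K$, as the statement demands.
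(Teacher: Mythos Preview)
The paper does not supply a proof of this lemma: it is quoted verbatim as Lemma~2.3 of Motegi's paper \cite{motegi} (with a parenthetical reference to \cite{knoten}) and used as a black box in the proof of Theorem~\ref{thm:main}. So there is no ``paper's own proof'' to compare your attempt against.

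That said, your outline follows the classical strategy one finds in Motegi's work and in Schubert--style arguments on companion tori: put $T_1$ and $\phi(T_2)$ in minimal position in the irreducible manifold $M_K$, use incompressibility to rule out trivial intersection circles, and then analyse the resulting essential annuli. The first two stages are fine as written. The part you flag as the main obstacle really is the crux, and your sketch does not yet close it. Two specific points: (i) from ``each inside annulus is boundary-parallel in $V_1$ and its parallelism region contains $K$'' it does \emph{not} immediately follow that the common slope is the meridian of $V_1$; boundary-parallel annuli in a solid torus can have any boundary slope, so you still need an argument (typically playing the inside annuli of $T_2$ in $V_1$ off against those of $T_1$ in $V_2$, together with the outside annuli in the knotted exteriors) to force the meridional slope on both sides simultaneously. (ii) Once you do have common meridian disks $D,D'$, you should argue carefully that the relevant component of $V_1$ cut along $D\sqcup D'$ is \emph{knotted} in the corresponding component of $V_2$ in the precise sense used here (no isotopy rel $D\cup D'$ to a standard ball); this is where minimality of $|T_1\cap T_2|$ is used one last time, since an unknotted ball would allow a further reduction.
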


By a \emph{knotted 3-ball}, we mean a ball $B$ for which there is no isotopy which takes $B$ to the standardly embedded 3-ball while leaving $D$ and $D'$ fixed.  (See Figure \ref{fig:knotted3}.)

\begin{figure}
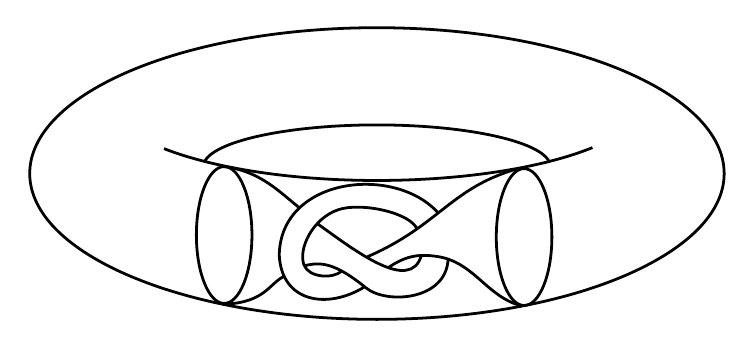
\caption{A knotted 3-ball $B$ inside of a solid torus with disks $D,D' \subset \partial B$.}\label{fig:knotted3}
\end{figure}

Before stating the result of McCullough, we recall the definition of a Dehn twist.  Let $C$ be a simple closed curve on a surface $\Sigma$ and let $A \subset \Sigma$ be an open annular neighborhood of $C$.  Then a \emph{Dehn twist} of $\Sigma$  at $C$ is a diffeomorphism $\phi: \Sigma \to \Sigma$ such that $\phi$ is the identity map on $\Sigma - A$ and $\phi|_{A}$ is given by a full twist around $C$.  More specifically, if $A = S^1 \times (0,1)$ is given the coordinates $(e^{i \theta},\, x)$, then $\phi|_A:(e^{i \theta},\, x) \to (e^{i (\theta + 2\pi x)},\, x)$.  The following result concerns diffeomorphisms of 3-manifolds that restrict to Dehn twists on the boundary of the manifold.

\begin{thm}[McCullough, Theorem 1 of \cite{mccullough}]\label{thm:dehntw}
Let $N$ be a compact, orientable 3-manifold that admits a homeomorphism which restricts to Dehn twists on the boundary of $N$ along a simple closed curve in $C \subset \partial N$ . Then $C$ bounds a disk in $N$.
\end{thm}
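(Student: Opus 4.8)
The statement is a structural result about mapping class groups of $3$-manifolds, and the natural route is to reduce to the geometric decomposition of $N$. The plan is first to dispose of the easy cases. If $C$ bounds a disk in $\partial N$ there is nothing to prove, so assume $C$ is essential in $\partial N$; then the Dehn twist $\tau_C$ has infinite order in the mapping class group of $\partial N$, and since the hypothesized homeomorphism $f$ restricts to a nonzero power $\tau_C^{n}$ of it, the isotopy class of $f$ has infinite order in the mapping class group of $N$. Next, if $[C]$ is trivial in $\pi_1(N)$, then by the loop theorem and Dehn's lemma $C$ bounds an embedded disk in $N$ and we are done, so assume $[C]\neq 1$. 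Finally I would cut $N$ along reducing $2$-spheres and along boundary-compressing disks that may be chosen disjoint from $C$; this reduces matters to the cases in which the component of $N$ containing $C$ is either a handlebody or is irreducible with incompressible boundary (and $\partial N\neq S^{2}$), hence Haken.

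In the handlebody case $\pi_1(N)$ is free and $f_{*}$ is an automorphism of $\pi_1(N)$ fitting into a commutative square with $(\tau_C^{n})_{*}$ on $\pi_1(\partial N)$; I would play this off against the description of the handlebody subgroup of the mapping class group of $\partial N$ together with the structure of polynomially growing automorphisms of free groups, the upshot being that a nonzero power of a Dehn twist along a simple closed curve lies in the handlebody subgroup only when that curve bounds a meridian disk. For the Haken case I would invoke Waldhausen's rigidity theorem (a homeomorphism of a Haken $3$-manifold is determined up to isotopy by its action on $\pi_1$) and Johannson's analysis of the mapping class group via the characteristic (JSJ) submanifold, isotoping $f$ to respect the JSJ decomposition. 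If the JSJ piece meeting $\partial N$ along the part of the boundary that $C$ runs through is atoroidal and not Seifert fibered, then by Mostow--Prasad rigidity its mapping class group is finite, contradicting the fact that $f$ has infinite order; so that piece is Seifert fibered (or an $I$-bundle, or $T^{2}\times I$). In a Seifert fibered piece every homeomorphism restricting to a power of a single boundary Dehn twist is isotopic to a composition of vertical twists along vertical annuli and tori, and chasing such a twist to $\partial N$ forces, if the restriction is to be supported in an annular neighborhood of the single curve $C$, that $C$ cobounds a disk in $N$ with a curve of $\partial N$ --- that is, $C$ bounds a compressing disk. Reassembling across the JSJ decomposition and the reductions of the first step then gives the result.

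The main obstacle is the Seifert fibered and annular case: one must verify that, up to isotopy, the vertical-twist homeomorphisms are the only ones whose boundary restriction is a Dehn twist, and that a nonzero power of $\tau_C$ occurs this way only when $C$ is of meridional type. This is exactly where the hypothesis that $f|_{\partial N}$ is a Dehn twist along a \emph{single} simple closed curve --- rather than an arbitrary product of twists along a system of curves --- is used, and it requires careful bookkeeping of how the vertical annuli of the characteristic submanifold meet $\partial N$.
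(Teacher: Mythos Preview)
The paper does not contain a proof of this statement: Theorem~\ref{thm:dehntw} is quoted as Theorem~1 of McCullough's paper \cite{mccullough} and used as a black box in the proof of Theorem~\ref{thm:main}. So there is no proof in this paper to compare your proposal against.

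That said, your outline is broadly in the spirit of McCullough's actual argument, which also passes through the characteristic submanifold (JSJ) machinery and Johannson's description of the mapping class group of a Haken manifold. A few cautions about your sketch as written. First, your reduction step ``cut along reducing $2$-spheres and along boundary-compressing disks that may be chosen disjoint from $C$'' needs justification: you must argue that the given homeomorphism can be isotoped to respect such a system, and that the curve $C$ survives the cutting in a controlled way. Second, in the handlebody case your appeal to ``polynomially growing automorphisms of free groups'' is vague and not obviously the right tool; McCullough's treatment of compressible boundary is more hands-on. Third, and most seriously, your endgame in the Seifert-fibered piece is not quite right: vertical twists along vertical annuli restrict on the boundary to Dehn twists along \emph{fibers}, not along curves bounding compressing disks, so the conclusion ``$C$ cobounds a disk in $N$'' does not follow from that description alone. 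What McCullough actually extracts is that either $C$ bounds a disk in $N$ or $C$ is isotopic to a boundary fiber and there is a vertical annulus realizing the twist; ruling out the latter in the situations where the theorem is applied requires the full statement of his result (which in fact allows a collection of curves $C_i$ and concludes that each $C_i$ either bounds a disk or cobounds an essential annulus with another $C_j$). For a single curve $C$ the annulus alternative collapses, but you should make that explicit rather than asserting the disk conclusion directly.
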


\section{Proofs of main results}\label{s:pfs}

The goal of this section is to prove Theorem \ref{thm:main} and its corollaries.  We begin with the following lemma.

\begin{lem}[Compare to Proposition 4.7 of \cite{kalf-lin}]\label{lem:type2orq<6}
Let $K$ be a prime satellite knot with a cosmetic crossing circle $L$ of order $q$.  Then at least one of the following must be true:
\begin{enumerate}
\item $M(q)$ contains no Type 2 tori
\item $|q| \leq 5$
\end{enumerate}
\end{lem}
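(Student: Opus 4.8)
The plan is to produce two punctured tori with distinct boundary slopes on the cusp $\Sigma:=\partial\eta(L)$ of $M_{K\cup L}$ and then invoke Gordon's Theorem \ref{thm:gordon}. Suppose $M(q)$ contains a Type 2 torus; by definition it is the image of a punctured torus $(P_1,\partial P_1)\subset(M_{K\cup L},\Sigma)$ of boundary slope $s_1=(-1/q)$, so the first punctured torus is handed to us for free. Since $L$ is cosmetic we have $K\cong K(q)$, so $K=K(0)$ is itself a satellite knot and $M(0)=M_K$ contains an essential (companion) torus.

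If some essential torus of $M_K$ is Type 2 for the Dehn filling $M_{K\cup L}\to M_K=M(0)$, i.e.\ is realized by a punctured torus $(P_2,\partial P_2)\subset(M_{K\cup L},\Sigma)$ of boundary slope $s_2=(1/0)$ (the meridian of $L$), then Theorem \ref{thm:gordon} applied to $P_1$ and $P_2$ gives $|q|=\Delta\bigl((-1/q),(1/0)\bigr)\le 5$, which is conclusion (2). So assume every essential torus of $M_K$ is Type 1. Then the outermost companion torus $T'$ of $K$ may be isotoped into $M_{K\cup L}$; write $T'=\partial W$ with $W\subset S^3$ a knotted solid torus, $K\subset\mathrm{int}(W)$ essential and not the core of $W$. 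Since $T'\subset M_{K\cup L}$ is disjoint from $\eta(L)$, either $L\subset S^3\setminus W$ or $L\subset\mathrm{int}(W)$.

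In the first subcase I would put the crossing disk $D$ in minimal position with respect to $T'$ and run an innermost--circle argument, using irreducibility of $M_{K\cup L}$ (Lemma \ref{lem:irred}), essentiality of $T'$, and primeness of $K$ to eliminate the possible innermost pieces; the conclusion should be that $D$ can be pushed off $W$, so that $L$ bounds a disk in $S^3\setminus\eta(K)$ and is nugatory, contradicting that $L$ is cosmetic. In the second subcase a similar argument isotopes $D$ into $W$, and then Lemma \ref{lem:LinV} shows $K(q)$ is also essential in $W$ and not the core, so $T'$ is a companion torus for $K(q)$ as well. One then passes to the pattern knot $K_1\subset V_1$ determined by $T'$: it is again a prime satellite knot (primeness and the satellite property for $K_1$ following from those of $K$ together with the assumed Type 2 torus), $L$ descends to a cosmetic crossing circle of the same order $q$, and the Type 2 torus of $M(q)$---being a companion torus for $K(q)$, it may be isotoped inside $W$---descends to a Type 2 torus in the exterior of $K_1(q)$. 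Since $\tau(M_{K_1})<\tau(M_K)$, induction on $\tau(M_K)$ finishes this subcase with $|q|\le 5$; the reduction strictly decreases $\tau$, so after finitely many steps we are forced into the first subcase or into the Gordon case, and a knot which is not a satellite admits no Type 2 torus at all.

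I expect the bulk of the work to lie in the last two subcases, and especially in showing that $L\subset S^3\setminus W$ forces $L$ to be nugatory---this is exactly the configuration in which $K$'s satellite structure is invisible from $\Sigma$, so that the Type 2 torus of $M(q)$ must be created by the twisting along $L$ itself. Pinning down the intersections of $D$ with $W$ (and, in the other subcase, with the Type 2 torus), and showing the only outcomes are a nugatory $L$, an honest reduction to the pattern, or $|q|\le 5$---calling on Motegi's Lemma \ref{thm:motegi} to compare $W$ with the solid torus bounded by the Type 2 torus and on McCullough's Theorem \ref{thm:dehntw} when diffeomorphisms restricting to boundary twists arise---is where the real argument is. The application of Gordon's theorem, once the two punctured tori are available, is immediate.
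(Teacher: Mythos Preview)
Your overall strategy---produce a second punctured torus on $\partial\eta(L)$ by showing $M(0)$ also contains a Type~2 torus, then apply Gordon's Theorem~\ref{thm:gordon}---matches the paper's. But the execution diverges in both subcases, and the first contains a genuine error.

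In your first subcase ($L\subset S^3\setminus W$ with $K\subset W$), the stated conclusion is impossible: the crossing disk $D$ meets $K$ twice and $K\subset W$, so $D$ can never be pushed off $W$, and there is no route from here to ``$L$ bounds a disk in $S^3\setminus\eta(K)$.'' What the innermost-circle analysis actually produces is a curve of $D\cap T'$ that encircles exactly one point of $K\cap D$; since this curve bounds a disk in $S^3$ and $W$ is knotted, it is a meridian of $W$, forcing the winding number of $K$ in $W$ to be $\pm1$. As $K$ is not the core, $T'$ is a follow--swallow torus and $K$ is composite, contradicting primeness. (If instead the innermost curve encircles both points, $L$ is isotopic to a meridian of $W$ and can be pushed into $W$, landing you in the second subcase.) This is how the paper handles it.

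Your second subcase is where the approaches really differ. You propose to pass to the pattern knot $K_1$ and induct on $\tau(M_K)$, but this requires verifying that $K_1$ is again prime, again a satellite, and that the Type~2 torus of $M(q)$ descends to a Type~2 torus for $K_1(q)$---none of which is automatic. In particular, once the Type~2 torus is carried into the unknotted solid torus $V_1$ via $f^{-1}$, the solid torus it bounds need not be knotted in $S^3$, so it need not be essential in $M_{K_1(q)}$ at all. The paper sidesteps all of this with a one-line counting argument: having shown that for \emph{every} essential torus $T\subset M_{K\cup L}$ one can isotope $K$, $L$, and $D$ into the solid torus $V$ it bounds, Lemma~\ref{lem:LinV} makes each such $T$ a companion torus for $K(q)$ as well. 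Thus every Type~1 torus in $M(0)$ is a Type~1 torus in $M(q)$; combined with $\tau(M(0))=\tau(M(q))$ (since $K(0)\cong K(q)$) and the assumed Type~2 torus in $M(q)$, this forces a Type~2 torus in $M(0)$. No induction, no pattern knots, and no appeal to Lemma~\ref{thm:motegi} or Theorem~\ref{thm:dehntw}---those tools belong to the proof of Theorem~\ref{thm:main}, not this lemma.
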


\begin{proof}
Suppose $M(q)$ contains a Type 2 torus.  We claim that $M(0)$ must also contain a Type 2 torus.  Assuming this is true, $M(0)$ and $M(q)$ each contain a Type 2 torus and hence there are punctured tori $(P_0, \partial P_0)$ and $(P_q, \partial P_q)$ in $(M, \partial \eta (L))$ such that $P_0$ has boundary slope $\infty = (1/0)$ and $P_q$ has boundary slope $(-1/q)$ on $\partial \eta (L)$.  Then, by Theorem \ref{thm:gordon}, $\Delta( \infty, -1/q) = |q| \leq 5$, as desired.  Thus it remains to show that there is a Type 2 torus in $M(0)$.

Let $M = M_{K \cup L}$.  Since $L$ is not nugatory, Lemma \ref{lem:irred} implies that $M$ is irreducible and hence $\tau(M)$ is well-defined.  First assume that $\tau(M)=0$.  Since $K$ is a satellite knot, $M(0)$ must contain an essential torus, and it cannot be Type 1.  Hence $M(0)$ contains a Type 2 torus.

%Since $K$ is a satellite knot, there is an essential torus in $T \subset M_K = M(0)$ and $T$ is knotted in $S^3$.  Suppose that $T$ is Type 1, so  $T \subset M = M(0) - L$.  If $T$ were compressible in $M$, then $T$ would also be compressible, and hence not essential, in $M(0)$.  Therefore either $T$ is essential in $M$, or $T$ is parallel in $M$ to $\partial \eta (L)$.  But $T$ is knotted and $L$ is unknotted, so $T$ in essential in $M$.  

Now suppose that $\tau(M)>0$ and let $T$ be an essential torus in $M$.  Then $T$ bounds a solid torus $V \subset S^3$.  Let $\rm{ext}(V)$ denote $S^3 - V$.  If $K \subset \rm{ext}(V)$, then $L$ must be essential in $V$.  If $V$ were knotted, then either $L$ is the core of $V$ or $L$ is a sattelite knot with companion torus $V$.  This contradicts the fact that $L$ is unknotted.  Hence $T$ is an unknotted torus.  By definition, $L$ bounds a crossing disk $D$.  Since $D$ meets $K$ twice, $D \cap \textrm{ext}(V) \neq \emptyset$.  We may assume that $D$ has been isotoped (rel boundary) to minimize the number of components in $D \cap T$.  Since an innermost component of $D - (D \cap T)$ is a disk and $L$ is essential in the unknotted solid torus $V$, $D \cap T$ consists of standard longitudes on the unknotted torus $T$.  Hence $D \cap \textrm{ext}(V)$ consists of either one disk which meets $K$ twice, or two disks which each meet $K$ once.  In the first case, $L$ is isotopic to the core of $V$, which contradicts $T$ being essential in $M$.  In the latter case, the linking number $lk(K,V)= \pm 1$.  So $K$ can be considered as the trivial connect sum $K \# U$, where $U$ is the unknot and the crossing change at $L$ takes place in the unknotted  summand $U$.  (See Figure \ref{fig:KconnU}.)  The unknot does not admit cosmetic crossing changes of any order, so $K_L(q) = K \# K'$ where $K' \neq U$.  This contradicts the fact that $K_L(q) = K$.  Hence, we may assume that $T$ is knotted and $K$ is contained in the solid torus $V$ bounded by $T$.

\begin{figure}
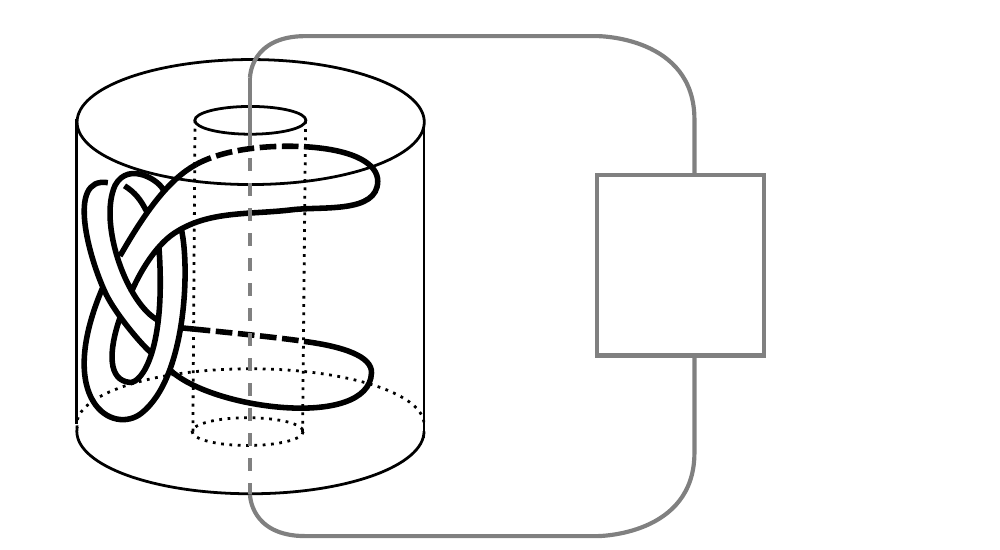
\caption{An example of an unknotted torus $V$ containing a crossing circle $L$ which bounds a crossing disk for the knot $K = K \# U$.}\label{fig:KconnU}
\end{figure}
 
If $L \subset \rm{ext}(V)$ and cannot be isotoped into $V$, then $D \cap T$ has a component $C$ that is both homotopically non-trivial and not boundary-parallel in $D - (K \cap D)$.  So $C$ must encircle exactly one of the two points of $K \cap D$.  This means that the winding number of $K$ in $V$ is $\pm{1}$.  Since $T$ cannot be boundary parallel in $M$, $K$ is not the core of $T$, and hence $T$ is a ``follow-swallow" torus for $K$ and $K$ is composite.  But this contradicts the assumption that $K$ is prime.  Hence we may assume that $L$ and $D$ are contained in $\rm{int}(V)$.  

Since $V$ is knotted and $D \subset \rm{int}(V)$, Lemma \ref{lem:LinV} implies that $T$ is also a companion torus for $K(q)$.  This means every Type 1 torus in $M(0)$ is also a Type 1 torus in $M(q)$.  Since $K(0)$ and $K(q)$ are isotopic, $\tau (M(0)) = \tau (M(q))$.  By assumption, $M(q)$ contains a Type 2 torus, which must give rise to a Type 2 torus in $M(0)$, as desired.
%either $T$ is a companion torus for both $K(0)$ and $K(q)$ or $T$ is not essential in both $M(0)$ and $M(q)$. Hence $M(0)$ and $M(q)$ have the same number of Type 1 tori.  Since $K(0)$ and $K(q)$ are isotopic, $\tau (M(0)) = \tau (M(q))$ and both $M(0)$ and $M(q)$ contain the same number of Type 2 essential tori.  By assumption, $M(q)$ contains a Type 2 torus, so $M(0)$ must contain one, as well.

\end{proof}

The following is an immediate corollary of Lemma \ref{lem:type2orq<6}.

\begin{cor}\label{cor:taunot0}
Let $K$ be a prime satellite knot with a cosmetic crossing circle $L$ of order $q$ with $|q| \geq 6$.  Then $\tau (M_{K \cup L}) > 0$.
\end{cor}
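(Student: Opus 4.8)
The plan is to argue by contradiction, feeding the hypothesis $\tau(M_{K\cup L})=0$ into Lemma \ref{lem:type2orq<6}. Write $M=M_{K\cup L}$. Since $L$ is cosmetic it is not nugatory, so Lemma \ref{lem:irred} gives that $M$ is irreducible; hence $\tau(M)$ is well-defined by Haken's Finiteness Theorem, and it suffices to rule out $\tau(M)=0$. So suppose toward a contradiction that $\tau(M)=0$, i.e.\ that $M$ contains no essential torus.

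Because $L$ is cosmetic, $K(q)$ is isotopic to $K$, which is a satellite knot; therefore $M(q)=M_{K(q)}$ contains a companion torus $T$, which by definition is essential in $M(q)$. By the dichotomy in the definition of Type 1 and Type 2 tori, $T$ is one or the other, and I would first rule out Type 1. If $T$ could be isotoped into $M\subset M(q)$, then its image would be incompressible in $M$ (a compressing disk in $M$ is also one in $M(q)$) and not boundary-parallel in $M$: parallelism to $\partial\eta(K)$ would persist after the Dehn filling along $\partial\eta(L)$, contradicting essentiality in $M(q)$, while parallelism to $\partial\eta(L)$ would make $T$ bound a solid torus, hence compress, in $M(q)$. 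Either way this contradicts $\tau(M)=0$. Hence $T$ is Type 2, so $M(q)$ contains a Type 2 torus.

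Now I apply Lemma \ref{lem:type2orq<6}. Since $M(q)$ does contain a Type 2 torus, conclusion (1) of that lemma fails, so conclusion (2) must hold, giving $|q|\le 5$. This contradicts the standing hypothesis $|q|\ge 6$, and therefore $\tau(M_{K\cup L})>0$, as claimed.

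The only point requiring any care — and it is a minor one — is the verification that an essential Type 1 torus in $M(q)$ remains essential in $M$, so that it genuinely witnesses $\tau(M)\ge 1$. This is precisely the fact already used tacitly in the $\tau(M)=0$ case of the proof of Lemma \ref{lem:type2orq<6} (``it cannot be Type 1''), so no new ingredient is needed; everything else is a direct quotation of that lemma together with the cosmetic hypothesis.
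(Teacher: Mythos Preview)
Your proof is correct and is precisely the argument the paper has in mind when it calls this an ``immediate corollary'' of Lemma~\ref{lem:type2orq<6}: assuming $\tau(M)=0$ forces any companion torus of the satellite $K(q)\cong K$ in $M(q)$ to be Type~2, and then alternative~(2) of the lemma contradicts $|q|\ge 6$. Your verification that a Type~1 torus in $M(q)$ would remain essential in $M$ is exactly the fact the paper uses (without further comment) in the $\tau(M)=0$ case of the proof of Lemma~\ref{lem:type2orq<6}.
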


We are now ready to prove our main theorem.

\begin{thm}\label{thm:main}
Suppose $K$ is a satellite knot which admits a cosmetic generalized crossing change of order $q$ with $|q| \geq 6$.  Then $K$ admits a pattern knot $K'$ which also has an order-$q$ cosmetic generalized crossing change.
\end{thm}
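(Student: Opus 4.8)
The plan is as follows. First I would reduce to the case that $K$ is prime. If $K$ is composite, a standard argument localizes the crossing change to a single prime summand, so that $K = K_1 \# K_2$ with the crossing change supported in the $K_1$ summand; then uniqueness of the prime decomposition gives $(K_1)_L(q) \simeq K_1$, and $L$ is not nugatory for $K_1$ because it is not nugatory for $K$. Taking $T$ to be the boundary of a regular neighborhood of $K_2$ that swallows $K_1$ produces a pattern knot $K' \simeq K_1$ for which the image of $L$ is exactly the cosmetic, order-$q$ crossing circle of $K_1$, so the conclusion holds in this case. From now on assume $K$ is prime.

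Next I would produce the companion torus. By Corollary \ref{cor:taunot0}, $\tau(M_{K \cup L}) > 0$, so $M_{K\cup L}$ contains an essential torus, and the analysis in the proof of Lemma \ref{lem:type2orq<6} shows that any such torus bounds a \emph{knotted} solid torus $V$ in which $K$ is essential, and that after an isotopy we may take $L$ and $D$ to lie in $\mathrm{int}(V)$. I would choose such a $V$ that is innermost. Fix a homeomorphism $f\colon (V',K') \to (V,K)$ from the standard unknotted solid torus, normalized so that $f$ carries the standard longitude of $V'$ to the Seifert longitude of the core of $V$, and put $L' = f^{-1}(L)$, $D' = f^{-1}(D)$; then $L'$ is a crossing circle with crossing disk $D'$ for the pattern $K' = f^{-1}(K) \subset V' \subset S^3$. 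Since the crossing change at $L$ is supported in $\mathrm{int}(V)$ we have $K_L(q) \subset \mathrm{int}(V)$ and $f^{-1}\big(K_L(q)\big) = K'_{L'}(q)$; thus the pattern of $K_L(q)$ relative to the data $(V,f)$ is precisely $K'_{L'}(q)$, and by Lemma \ref{lem:LinV} the torus $\partial V$ is also a companion torus for $K_L(q)$.

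The heart of the argument is to show $K'_{L'}(q) \simeq K'$ in $S^3$. Since $K_L(q) \simeq K$, there is an ambient isotopy $\psi$ of $S^3$ with $\psi(K) = K_L(q)$, so $V$ and $\psi(V)$ are both knotted solid tori in which $K_L(q)$ is essential. I would apply Motegi's Lemma \ref{thm:motegi}: the knotted-$3$-ball alternative corresponds to $K_L(q)$ being a nontrivial connected sum and so does not arise in the prime case, leaving the case that $\partial V$ may be isotoped off $\psi(\partial V)$; combined with the minimality of $V$ this lets one adjust $\psi$ by an ambient isotopy fixing $K_L(q)$ to a homeomorphism $\Psi\colon S^3 \to S^3$, isotopic to the identity, with $\Psi(K) = K_L(q)$ and $\Psi(V) = V$. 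Then $g := f^{-1} \circ (\Psi|_V) \circ f$ is a self-homeomorphism of the solid torus $V'$ carrying $K'$ to $K'_{L'}(q)$; being a self-homeomorphism of a solid torus, $g$ preserves the meridian of $V'$ up to sign, while $\Psi$ restricted to $\partial V$ is a self-homeomorphism of the exterior of the core of $V$ and hence preserves its Seifert longitude up to sign. With the chosen normalization of $f$ this forces $g$ to preserve the longitude of $V'$ up to sign, so $g|_{\partial V'}$ extends over the complementary solid torus $S^3 - \mathrm{int}(V')$; gluing in this extension gives a homeomorphism of $S^3$ taking $K'$ to $K'_{L'}(q)$, so $K' \simeq K'_{L'}(q)$. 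McCullough's Theorem \ref{thm:dehntw} is the tool that controls the residual Dehn-twisting in this gluing (and, if one prefers to treat rather than discard it, in the knotted-$3$-ball case of Motegi's lemma).

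Finally I would check that $L'$ is not nugatory for $K'$, which together with the previous step shows $L'$ is a cosmetic generalized crossing circle of order $q$ for $K'$. If $L'$ bounded a disk $E'$ in $S^3 - \eta(K')$, isotope $E'$ to meet $\partial V'$ minimally; since $\partial E'$ lies on $\partial\eta(L')$ and is therefore disjoint from every ball cut off by an innermost circle of $E' \cap \partial V'$, an irreducibility argument removes all such circles, so $E' \subset V' - \eta(K')$, and then $f(E')$ is a disk in $V - \eta(K) \subset S^3 - \eta(K)$ bounded by $L$, contradicting the hypothesis that $L$ is not nugatory for $K$. I expect the main obstacle to be the reconciliation of the two companion tori $V$ and $\psi(V)$ via Motegi's Lemma and the transfer of the ambient isotopy through $f$ — that is, controlling the solid-torus mapping class group ambiguity so that the extension over $S^3 - \mathrm{int}(V')$ exists; note that the hypothesis $|q| \geq 6$ is used only through Corollary \ref{cor:taunot0}, which is what guarantees the existence of the companion torus $T$ in the first place.
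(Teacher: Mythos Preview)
Your overall architecture matches the paper's: reduce to the prime case via Torisu, use Corollary~\ref{cor:taunot0} and the proof of Lemma~\ref{lem:type2orq<6} to find an innermost companion torus $T=\partial V$ with $L,D\subset\mathrm{int}(V)$, apply Motegi's Lemma~\ref{thm:motegi} to arrange $\Psi(V)=V$, and transfer through $f$ to obtain $K'_{L'}(q)\simeq K'$. Two points, however, diverge from the paper and one of them is a genuine gap.

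\smallskip
\noindent\textbf{The nugatory step.} Your argument that a nugatory disk $E'$ for $L'$ can be isotoped into $V'-\eta(K')$ does not go through. Because $V'$ is \emph{unknotted}, $\partial V'$ compresses in $S^3-\mathrm{int}(V')$, so an innermost circle of $E'\cap\partial V'$ lying in the complementary solid torus may well be a longitude of $V'$; such a circle cannot be removed by irreducibility. Equivalently, the longitude of $V'$ is nontrivial in $\pi_1(V'-K')$ but dies in $\pi_1(S^3-K')$, so $\pi_1(V'-K')\to\pi_1(S^3-K')$ is not injective and $L'$ can be nugatory for $K'$ in $S^3$ without bounding a disk in $V'-\eta(K')$. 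This is precisely the difficulty the paper's proof is built to overcome: rather than trying to push $E'$ into $V'$, the paper uses the homeomorphism $h=f^{-1}\circ\Phi\circ f$ itself, builds an annulus $A\subset V'$ from the crossing disk and the nugatory disk, splits $V'$ into $V'_1\cup_A V'_2$, and applies McCullough's Theorem~\ref{thm:dehntw} to a manifold $X$ on which $h$ restricts to Dehn twists along $L'$, concluding that $L'$ bounds a disk in $V'-\eta(K')$ and hence that $L$ is nugatory. So McCullough's theorem enters exactly here, not as a device to ``control residual Dehn-twisting in the gluing'' as you suggest.

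\smallskip
\noindent\textbf{Motegi's case~(2).} Your claim that the knotted-$3$-ball alternative ``corresponds to $K_L(q)$ being a nontrivial connected sum'' is asserted without justification. The paper instead first proves that $W=\overline{V-\eta(K\cup L)}$ is atoroidal (using that $T$ is innermost), and then observes that a knotted $3$-ball as in Lemma~\ref{thm:motegi}(2) would produce an essential torus in $W$ (or in $\psi(W)$), a contradiction. You should supply this atoroidality argument; it is also what makes the ``innermost'' conclusion $T\parallel\Phi(T)$ clean once $T\cap\Phi(T)=\emptyset$.
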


\begin{proof}
Let $K$ be such a satellite knot with a crossing circle $L$ bounding a crossing disk $D$ which corresponds to a cosmetic generalized crossing change of order $q$.  Let $M = M_{K \cup L}$.  

If $K$ is a composite knot, then Torisu \cite{torisu} showed that the crossing change in question must occur within one of the summands of $K = K_1 \# K_2$, say $K_1$.  We may assign to $K$ the ``follow-swallow" companion torus $T$, where the core of $T$ is isotopic to $K_2$.  Then the patten knot corresponding to $T$ is $K_1$ and the theorem holds.  

Now assume $K$ is prime.  By Corollary \ref{cor:taunot0}, $\tau (M) >0$.  Let $T$ be an essential torus in $M$ and let $V \subset S^3$ be the solid torus bounded by $T$ in $S^3$.  As shown in the proof of Lemma \ref{lem:type2orq<6}, $V$ is knotted in $M$ and $D$ can be isotoped to lie in $\rm{int}(V)$.  This means $T$ is a companion torus for the satellite link $K \cup L$.  Let $K' \cup L'$ be a pattern link for $K \cup L$ corresponding to $T$.  So there is an unknotted solid torus $V' \subset S^3$ such that $(K' \cup L') \subset V'$ and there is a homeomorphism $f: (V',K',L') \to (V,K,L)$.

Let $\T$ be a Haken system for $M$ such that $T \in \T$.  We will call a torus $J \in \T$ \emph{innermost with respect to $K$} if $M$ cut along $\T$ has a component $C$ such that $\partial C$ contains $\partial \eta (K)$ and a copy of $J$.  In other words, $J \in \T$ is innermost with respect to $K$ if there are no other tori in $\T$ separating $J$ from $\eta(K)$.  Choose $T$ to be innermost with respect to $K$.

Let $W= \overline{V - \eta(K \cup L)}$.  We first wish to show that $W$ is atoroidal.  By way of contradiction, suppose that there is an essential torus $F \subset W$.  Then $F$ bounds a solid torus in $V$ which we will denote by $\widehat{F}$.  Since $T$ is innermost with respect to $K$, either $F$ is parallel to $T$ in $M$, or $K \subset V - \widehat{F}$.  By assumption, $F$ is essential in $W$ and hence not parallel to $T \subset \partial W$.  So $K \subset V - \widehat{F}$ and, since $F$ is incompressible, $L \subset \widehat{F}$.  But then $F$ must be unknottted and parallel to $\partial \eta(L) \subset \partial W$, which is a contradiction.  Hence $W$ is indeed atoroidal, and $W' = \overline{V' - \eta(K' \cup L')}$ must be atoroidal as well.

To finish the proof, we must consider two cases, depending on whether $T$ is compressible in $V - \eta(K(q))$.

\smallskip
\smallskip

\noindent \emph{Case 1:} $K(q)$ is essential in $V$.

We wish to show that there is an isotopy $\Phi: S^3 \to S^3$ such that $\Phi(K(q)) = K(0)$ and $\Phi (V) = V$.  First, suppose $K(q)$ is the core of $T$.  By Lemma \ref{lem:LinV}, $K$ is also the core of $T$.  Since $L$ is cosmetic, there is an ambient isotopy $\psi: S^3 \to S^3$ taking $K(q)$ to $K(0)$.  Since $K(q)$ and $K(0)$ are both the core of $V$, we may choose $\psi$ so that $\psi(V) = V$ and let $\Phi = \psi$.

If $K(q)$ is not the core of $T$, then $T$ is a companion torus for $K(q)$.  Since $K(0) = (K(q))_L (-q)$, we may apply Lemma \ref{lem:LinV} to $K(q)$ to see that $T$ is also a companion torus for $K(0)$.  Again, there is an ambient isotopy $\psi: S^3 \to S^3$ taking $K(q)$ to $K(0)$ such that $V$ and $\psi(V)$ are both solid tori containing $K(0) = \psi(K(q)) \subset S^3$.  If $\psi(V) = V$, we once more let $\Phi = \psi$.  If $\psi(V) \neq V$, we may apply Lemma \ref{thm:motegi} to $V$ and $\psi(V)$.  If part (2) of Lemma \ref{thm:motegi} were satisified, then $\psi(V) \cap V$ would give rise to a knotted 3-ball contained in either $V$ or $\psi(V)$.  This contradicts the fact that $W$, and hence $\psi(W)$, are atoroidal.  Hence part (1) of Lemma \ref{thm:motegi} holds, and there is an isotopy $\phi: S^3 \to S^3$ fixing $K(0)$ such that $(\phi \circ \psi)(T) \cap T = \emptyset$.  Let $\Phi = (\phi \circ \psi):S^3 \to S^3$.  Recall that by Lemma \ref{lem:type2orq<6}, $M(q)$ contains no Type 2 tori.  Hence $T$ remains innermost with respect to $K(q)$ in $S^3$ and therefore $\Phi(T)$ is also innermost with respect to $K(0)$.   Either $T \subset S^3 - \Phi(V)$ or $\Phi(T) \subset S^3 - V$.  In either situation, the fact that  $T$ and $\Phi(T)$ are innermost implies that $T$ and $\Phi (T)$ are in fact parallel in $M_K$.  So, after an isotopy which fixes $K(0) \subset S^3$, we may assume that $\Phi(V) = V$.

Now let $h = (f^{-1} \circ \Phi \circ f):V' \to V'$.  Note that $h$ preserves the canonical longitude of $\partial V'$ (up to sign).  Since $h$ maps $K'(q)$ to $K'(0)$,  $K'(q)$ and $K'(0)$ are isotopic in $S^3$.  So either $L'$ gives an order-$q$ cosmetic generalized crossing change for the pattern knot $K'$, or $L'$ is a nugatory crossing circle for $K'$.

Suppose $L'$ is nugatory.  Then $L'$ bounds a crossing disk $D'$ and another disk $D'' \subset M_{K'}$.  We may assume $D' \cap D'' = L'$.  Let $A = D' \cup (D'' \cap V')$.  Since $\partial V'$ is incompressible in $V'- \eta(K')$, by surgering along components of of $D'' \cap \partial V'$ which bound disks or cobound annuli in $\partial V'$, we may assume $A$ is a properly embedded annulus in $V'$ and each component of  $A \cap \partial V'$ is a longitude of $V'$.  Since $L' \subset V'$, we can extend the homeomorphism $h$ on $V' \subset S^3$ to a homeomorphism $H$ on all of $S^3$.  Since $V'$ is unknotted, let $C$ be the core of the solid torus $S^3 - \rm{int} (V')$.  We may assume that $H$ fixes $C$.  Since $D' \cup D''$ gives the same (trivial) connect sum decomposition of $K'(0) = K'(q)$ and $H$ preserves canonical longitudes on $\partial V'$, we may assume $H(D')$ is isotopic to $D'$ and $H(D'')$ is isotopic to $D''$.  In fact, this isotopy may be chosen so that $H(C)$ and $H(V')$ remain disjoint throughout the isotopy and $H(V')=V'$ still holds after the isotopy.  Thus, we may assume $h(A)=A$ and $A$ cuts $V'$ into two solid tori $V'_1$ and $V'_2$, as shown in Figure \ref{fig:torus}.  

\begin{figure}
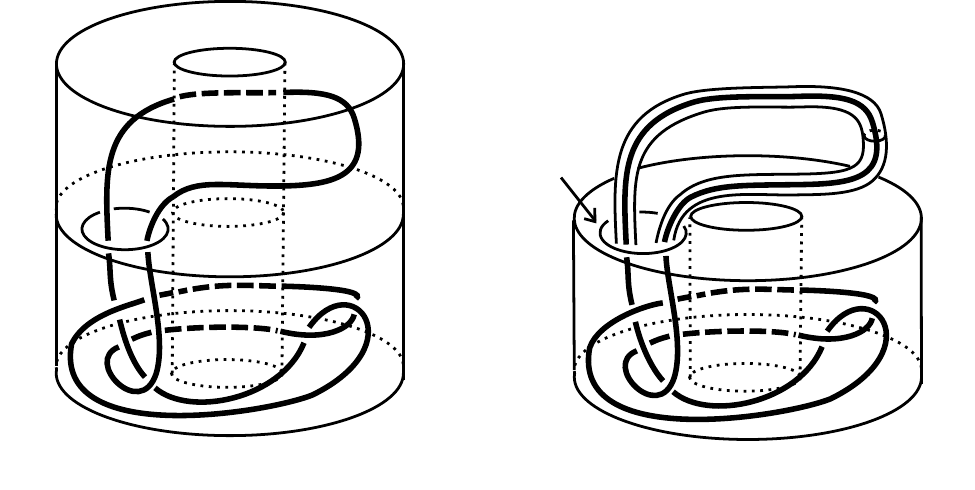
\caption{On the left is the solid torus $V'$, cut into two solid tori by the annulus $A$.  On the right is a diagram depicting the construction of $X$ from Subcase 1.1 of the proof of Theorem \ref{thm:main}.}\label{fig:torus}
\end{figure}

We now consider two subcases, depending on how $h$ acts on $V'_1$ and $V'_2$.

\smallskip
\smallskip

\noindent \emph{Subcase 1.1:}  $h: V'_i \to V'_i$ for $i=1,2$.

Up to ambient isotopy, we may assume the following.

\begin{enumerate}
\item $K'(q) \cap V'_1 = K'(0) \cap V'_1$
\item $K'(q) \cap V'_2$ is obtained from $K'(0) \cap V'_2$ via $q$ full twists at $L'$
\end{enumerate}

Let $X$ be the 3-manifold obtained from $V'_2 - \eta(V'_2 \cap K'(0))$ by attaching to $A \subset \partial V'_2$ a thickened neighborhood of $\partial \eta(K'(0)) \cap V'_1$.  (See Figure \ref{fig:torus}.)  Then $h|_X$ fixes $X$ away from $V'_2$ and acts on $X \cap V'_2$ by twisting $\partial \eta(K'(0)) \subset \partial X$ $q$ times at $L'$.  Hence there is a homeomorphism from $X$ to $h(X)$ given by $q$ Dehn twists at $L' \subset \partial X$.  So, by Theorem \ref{thm:dehntw}, $L'$ bounds a disk in $X \subset (V' - \eta(K'))$.  But this means $L$ bounds a disk in $(V- \eta(K)) \subset M_K$ and hence $L$ is nugatory, contradicting our initial assumptions.

\smallskip
\smallskip

\noindent \emph{Subcase 1.2:}  $h$ maps $ V'_1 \to V'_2$ and $V'_2 \to V'_1$.

Again, we may assume the following.

\begin{enumerate}
\item $K'(q) \cap V'_1 = K'(0) \cap V'_2$
\item $K'(q) \cap V'_2$ is obtained from $K'(0) \cap V'_1$ via $q$ full twists at $L'$
\end{enumerate}

This time we construct $X$ from $V'_1 - \eta(V'_1 \cap K'(0))$ by attaching a thickened neighborhood of $\partial \eta(K'(0)) \cap V'_2$ to $A \subset \partial V'_1$.  Then the argument of Subcase 1.1 once again shows that $L$ must have been nugatory, giving a contradiction.

\smallskip

Hence, in Case 1, we have a pattern knot $K'$ for $K$ admitting an order-$q$ cosmetic generalized crossing change, as desired.

\smallskip
\smallskip

\noindent \emph{Case 2:} $T$ is compressible in $V - \eta(K(q))$.

In this case, $K(q)$ is contained in a 3-ball $B \subset V$.  Since $K(q)$ is not essential in $V$, by Lemma \ref{lem:LinV}, $K(0)$ is also not essential in $V$ and $K(0) = f(K'(0))$ can be isotoped to $K(q) = f(K'(q))$ via an isotopy contained in the 3-ball $B \subset V$.  This means that, once again, $K'(0)$ is isotopic to $K'(q)$ in $S^3$, and either $L'$ gives an order-$q$ cosmetic generalized crossing change for the pattern knot $K'$ or $L'$ is a nugatory crossing circle for $K'$.  Applying the arguments of each of the subcases above, we see that $L'$ cannot be nugatory, and hence $K'$ is a pattern knot for $K$ admitting an order-$q$ cosmetic generalized crossing change.
\end{proof}

In \cite{thurston}, Thurston shows that any knot falls into exactly one of three categories: torus knots, hyperbolic knots and satellite knots.  Theorem \ref{thm:main} gives obstructions to when cosmetic generalized crossing changes can occur in satellite knots.  This leads us to several useful corollaries.

\begin{cor}\label{cor:sat}
Let $K$ be a satellite knot admitting a cosmetic generalized crossing change of order $q$ with $|q| \geq 6$.  Then $K$ admits a pattern knot $K'$ which is hyperbolic.  %Further, if $g(K')=1$, then $K'$ is algebraically slice.
\end{cor}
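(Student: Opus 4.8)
The plan is to show that the pattern knot produced by Theorem~\ref{thm:main} is itself hyperbolic, by eliminating the other two cases of Thurston's trichotomy~\cite{thurston}. Applying Theorem~\ref{thm:main} to $K$, we obtain a pattern knot $K'$ — geometrically essential in some unknotted solid torus $V' \subset S^3$ — and a crossing circle $L' \subset \mathrm{int}(V')$ giving an order-$q$ cosmetic generalized crossing change with $|q| \geq 6$. Inspecting the proof of Theorem~\ref{thm:main} we record two facts for later use: $K'$ is prime (when $K$ is prime this is what the proof produces, since a summing sphere for a composite $K'$ inside $V'$ would push forward to one for $K$; when $K$ is composite one may take the pattern to be a prime summand of $K$), and $W' := \overline{V' - \eta(K' \cup L')}$ is atoroidal.

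The first step is to rule out that $K'$ is a satellite knot. Suppose it is. Then $K'$ is a prime satellite knot carrying a cosmetic crossing circle $L'$ of order $q$ with $|q| \geq 6$, so Corollary~\ref{cor:taunot0} gives $\tau(M_{K' \cup L'}) > 0$. But since $V'$ is unknotted we have $M_{K' \cup L'} = W' \cup_{\partial V'} U'$ with $U' = \overline{S^3 - V'}$ a solid torus, so $\partial V'$ is compressible in $M_{K' \cup L'}$; and $M_{K' \cup L'}$ is irreducible because $L'$ is not nugatory (Lemma~\ref{lem:irred}). Hence any essential torus of $M_{K' \cup L'}$ can be isotoped, by the usual innermost-disk argument, off a compressing disk for $\partial V'$ and therefore off $\partial V'$ itself, so that it lies in $W'$. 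Such a torus cannot be boundary-parallel in $W'$: a torus parallel to $\partial V'$, to $\partial\eta(K')$, or to $\partial\eta(L')$ is respectively compressible or boundary-parallel in $M_{K' \cup L'}$ (using that $K'$ and $L'$ are geometrically essential in $V'$ and that $L'$ is not nugatory), hence not essential there. So the torus is essential in $W'$, contradicting atoroidality. Therefore $\tau(M_{K' \cup L'}) = 0$, a contradiction, and $K'$ is not a satellite.

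Next, $K'$ is not a torus knot — and in particular not the unknot: every torus knot is fibered, and by Kalfagianni~\cite{kalf} a fibered knot admits no cosmetic generalized crossing change of any order, contradicting the existence of the cosmetic circle $L'$. By Thurston's trichotomy a knot that is neither a torus knot nor a satellite knot is hyperbolic, so $K'$ is hyperbolic, as desired.

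The step I expect to demand the most care is the implication ``essential torus in $M_{K' \cup L'}$ $\Rightarrow$ essential torus in $W'$'': one needs the standard cut-and-paste to slide an essential torus off $\partial V'$ using irreducibility, and then a clean case analysis showing that each way the resulting torus in $W'$ might be boundary-parallel in fact corresponds to a compressible or boundary-parallel (hence inessential) torus in $M_{K' \cup L'}$. Once that reduction is in hand, the corollary follows by stringing together Corollary~\ref{cor:taunot0}, the atoroidality of $W'$ coming from Theorem~\ref{thm:main}, Kalfagianni's fibered-knot obstruction, and Thurston's trichotomy.
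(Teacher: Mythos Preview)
Your argument reaches the goal but by a different route than the paper. The paper's proof simply applies Theorem~\ref{thm:main} \emph{repeatedly}: if the pattern $K'$ produced is still a satellite, apply the theorem again to $K'$, and so on until the pattern is no longer a satellite; then Kalfagianni's fibered-knot obstruction and Thurston's trichotomy finish in one line. You instead apply Theorem~\ref{thm:main} once and mine its proof for the atoroidality of $W'$, then combine this with Corollary~\ref{cor:taunot0} and a cut-and-paste argument to show that the first pattern is already non-satellite. Your approach yields the slightly sharper statement that the pattern coming from the innermost companion torus is itself hyperbolic, and it avoids any implicit termination argument for the iteration; the paper's approach is considerably shorter and uses only the \emph{statement} of Theorem~\ref{thm:main}. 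One wrinkle worth tightening: your justification that $K'$ is prime only handles summing spheres that already lie in $V'$, which is not guaranteed a priori. You can sidestep this entirely, since the conclusion $\tau(M_{K'\cup L'})>0$ follows without primality (if $\tau=0$ then every essential torus in $M_{K'}$ and in $M_{K'(q)}$ is Type~2, and Theorem~\ref{thm:gordon} gives $|q|\le 5$ exactly as in the $\tau(M)=0$ case of Lemma~\ref{lem:type2orq<6}); alternatively, once your argument shows $K'$ is not a satellite, primality is automatic, since composite knots are satellites.
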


\begin{proof}
Applying Theorem \ref{thm:main}, repeatedly if necessary, we know $K$ admits a pattern knot $K'$ which is not a satellite knot and which also admits an order-$q$ cosmetic generalized crossing change.  Kalfagianni has shown that fibered knots do not admit cosmetic generalized crossing changes of any order \cite{kalf}, and it is well-known that all torus knots are fibered.  Hence, by Thurston's classification of knots \cite{thurston}, $K'$ must be hyperbolic.  %For the second statement of the corollary, note that the proof of Theorem 1.1(1) in \cite{balm} can be applied to generalized crossing changes of genus-one knots to show that if $g(K')=1$ and $K'$ admits a cosmetic generalized crossing change of any order, then $K'$ is algebraically slice.
\end{proof}

\begin{cor}\label{cor:fibered}
Suppose $K'$ is a torus knot.  Then no prime satellite knot with pattern $K'$ admits an order-$q$ cosmetic generalized crossing change with $|q| \geq 6$.
\end{cor}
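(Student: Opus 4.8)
The plan is to argue by contradiction, using Theorem~\ref{thm:main} to push a hypothetical cosmetic generalized crossing change down onto the torus-knot pattern and then invoking Kalfagianni's theorem that fibered knots admit no such crossing changes. So I would begin by supposing that $K$ is a prime satellite knot whose pattern $K'$ is a torus knot, and that $K$ admits an order-$q$ cosmetic generalized crossing change with $|q|\ge 6$, realized by a crossing circle $L$ bounding a crossing disk $D$. Let $T=\partial V$ be the companion torus of $K$ that realizes the pattern $K'$, with $K\subset\operatorname{int}(V)$ and $f\colon(V',K',L')\to(V,K,L)$ the corresponding pattern homeomorphism, where $L'=f^{-1}(L)$.

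First I would note that, because $K$ is prime, the argument used in the proof of Lemma~\ref{lem:type2orq<6} applies to this particular $T$: if $L$ could not be isotoped into $\operatorname{int}(V)$ then $K$ would be composite, so after an isotopy $L$ and $D$ lie in $\operatorname{int}(V)$. Next I would show that $T$ can be taken innermost with respect to $K$ in $M_{K\cup L}$, so that the construction in the proof of Theorem~\ref{thm:main} may be carried out with this specific $T$ and hence produces an order-$q$ cosmetic generalized crossing change on the pattern knot associated to $T$ — which is exactly $K'$. The point behind innermost-ness is that $\overline{V-\eta(K)}\cong\overline{V'-\eta(K')}$ is a cable space, hence Seifert fibered and atoroidal; so any essential torus of $M_{K\cup L}$ lying between $T$ and $\eta(K)$, after the $\partial\eta(L)$ boundary is Dehn-filled along the meridian of $L$, is either boundary-parallel (hence parallel to $T$) or becomes compressible, and the cable-space structure together with the incompressibility of $D$ and the unknottedness of $L$ rules out the latter. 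Granting this, the Case~1/Case~2 analysis of Theorem~\ref{thm:main} goes through verbatim with this $T$ and yields that $L'$ is a non-nugatory crossing circle realizing an order-$q$ cosmetic generalized crossing change on $K'$.

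Finally, a torus knot is fibered, and Kalfagianni~\cite{kalf} proved that no fibered knot admits a cosmetic generalized crossing change of any order; this contradicts the previous step and proves the corollary. The main obstacle is that middle step: verifying that inserting the crossing circle $L$ inside $V$ creates no new essential torus of $\overline{V-\eta(K\cup L)}$ that separates $L$ from $T$. This requires a short case analysis of how such an intruding torus $F$ can meet $L$ — according to whether $L$ lies inside or outside the solid torus bounded by $F$, and whether $F$ stays incompressible after filling its $\partial\eta(L)$-boundary — exploiting the atoroidality of the cable space $\overline{V'-\eta(K')}$, the incompressibility of the crossing disk $D$, and the fact that $L$ is unknotted and not nugatory.
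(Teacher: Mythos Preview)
Your overall strategy matches the paper's: show that the specific companion torus $T$ realizing the torus-knot pattern can be taken innermost in a Haken system for $M_{K\cup L}$, then run the proof of Theorem~\ref{thm:main} with that particular $T$ to force an order-$q$ cosmetic crossing change on $K'$, and finish with Kalfagianni's theorem on fibered knots. The endgame is fine.

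The gap is in your innermost-ness argument. You assert that $\overline{V-\eta(K)}\cong\overline{V'-\eta(K')}$ is a cable space, hence Seifert fibered and atoroidal. This is only true when $K'$ sits on a concentric torus in $V'$, i.e.\ when $K$ is a \emph{cable} knot. The corollary, however, is meant to cover an arbitrary torus-knot pattern: $K'$ may be embedded in $V'$ in a way that does not place it on $\partial V'$ (the paper explicitly emphasizes this point after the corollary). For such embeddings $\overline{V'-\eta(K')}$ need not be a cable space, and your Seifert-fibered-hence-atoroidal claim does not go through. Your subsequent case analysis leans on this structure, so the argument as written does not close.

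The paper's fix is much lighter than your proposed case analysis. First it observes (via Lemma~\ref{lem:type2orq<6}) that $T$ is Type~1 and then checks that $T$ is actually essential in $M_{K\cup L}$, so that $T$ belongs to some Haken system $\mathcal{T}$ for $M_{K\cup L}$. Then innermost-ness is a one-line consequence of $K'$ not being a satellite: any torus $J\in\mathcal{T}$ strictly between $T$ and $\eta(K)$ would, by the same reasoning you already invoked from Lemma~\ref{lem:type2orq<6}, have $K\cup L$ inside the solid torus it bounds; pushed through $f^{-1}$ this produces an essential torus in $M_{K'}$ and hence makes $K'$ a satellite knot, contradicting the hypothesis that $K'$ is a torus knot. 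No cable-space or Seifert-fibered structure is needed. Replacing your middle paragraph with this observation repairs the proof and brings it in line with the paper's.
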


\begin{proof}
Let $K$ be a prime satellite knot which admits a cosmetic generalized crossing change of order $q$ with $|q| \geq 6$.  By way of contradiction, suppose $T$ is a companion torus for $K$ corresponding to a pattern torus knot $K'$.  Since $K$ is prime, Lemma \ref{lem:type2orq<6} implies that $T$ is Type 1 and hence corresponds to a torus in $M = M_{K \cup L}$, which we will also denote by $T$.  If $T$ is not essential in $M$, then $T$ must to be parallel in $M$ to $\partial \eta(L)$.  But then $T$ would be compressible in $M_K$, which cannot happen since $T$ is a companion torus for $K \subset S^3$ and is thus essential in $M_K$.  So $T$ is essential in $M$ and there is a Haken system $\T$ for $M$ with $T \in \T$.  Since the pattern knot $K'$ is a torus knot and hence not a satellite knot, $T$ must be innermost with respect to $K$.  Then the arguments in the proof of Theorem \ref{thm:main} show that $K'$ admits an order-$q$ cosmetic generalized crossing change.  However, torus knots are fibered and hence admit no cosmetic generalized crossing changes of any order, giving us our desired contradiction.
\end{proof}

Note that if $K'$ is a torus knot which lies on the surface of the unknotted solid torus $V'$, then $(K', V')$ is a pattern for satellite knot which is by definition a cable knot.  Since any cable of a fibered knot is fibered, it was already known by \cite{kalf} that these knots do not admit cosmetic generalized crossing changes.  However, Corollary \ref{cor:fibered} applies not only to cables of non-fibered knots, but also to patten torus knots embedded in \emph{any} unknotted solid torus $V'$ and hence gives us a new class of knots which do not admit a cosmetic generalized crossing change of order $q$ with $|q| \geq 6$.

%there are many non-fibered satellite knots which admit fibered pattern knots.  In fact, it is shown by Hirasawa, Murasugi and Silver in  \cite{hmk} that any fibered knot $K'$ can be embedded in an unknotted solid torus $V'$ with $K'$ essential in $V'$ such that any satellite knot $K$ given by some homeomorphism $f:(V',K') \to (V,K)$ is not a fibered knot.  Hence Corollary \ref{cor:fibered} gives us a new class of knots which do not admit a cosmetic generalized crossing change of order $q$ with $|q| \geq 6$.

The proof of Corollary \ref{cor:fibered} leads us to the following.

\begin{cor}\label{cor:g1}
Let $K'$ be a knot such that $g(K')=1$ and $K'$ is not a satellite knot.  If there is a prime satellite knot $K$ such that $K'$ is a pattern knot for $K$ and $K$ admits a cosmetic generalized crossing change of order $q$ with $|q| \geq 6$, then $K'$ is hyperbolic and algebraically slice.
\end{cor}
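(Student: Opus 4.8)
\noindent The plan is to mimic the proof of Corollary \ref{cor:fibered} to manufacture a cosmetic generalized crossing change on the pattern $K'$ itself, and then feed the genus-one hypothesis into Lemma \ref{lem:g1} and the non-satellite hypothesis into Thurston's trichotomy.

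First I would set up the companion torus. Since $K'$ is by hypothesis a pattern knot for the prime satellite knot $K$, there is a companion torus $T$ for $K$ whose associated pattern is $K'$; choosing a crossing circle $L$ realizing the order-$q$ cosmetic change and writing $M = M_{K \cup L}$, Lemma \ref{lem:type2orq<6} (using $|q| \geq 6$ together with primeness of $K$) forces $T$ to be Type 1, so $T$ may be regarded as an essential torus in $M$ — indeed, if $T$ were inessential in $M$ it would be parallel to $\partial \eta(L)$ and hence compressible in $M_K$, contradicting that $T$ is a companion torus for $K \subset S^3$. Because $K'$ is not a satellite knot, $T$ can be placed in a Haken system for $M$ and taken innermost with respect to $K$, so the argument in the proof of Theorem \ref{thm:main} applies and yields that $K'$ admits an order-$q$ cosmetic generalized crossing change.

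With this in hand, the two conclusions follow directly. Since $g(K') = 1$ and $K'$ admits a cosmetic generalized crossing change of order $q \neq 0$, Lemma \ref{lem:g1} gives that $K'$ is algebraically slice. For hyperbolicity, Kalfagianni \cite{kalf} showed that fibered knots admit no cosmetic generalized crossing change of any order, and every torus knot is fibered, so $K'$ is not a torus knot; combined with the hypothesis that $K'$ is not a satellite knot, Thurston's classification \cite{thurston} leaves only the possibility that $K'$ is hyperbolic.

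The only step requiring genuine care is verifying that the innermost-torus hypothesis needed to invoke Theorem \ref{thm:main} is legitimately available — that a companion torus for $K$ realizing the pattern $K'$ can be taken innermost with respect to $K$ precisely because $K'$ is itself not a satellite. This is exactly the point already handled in the proof of Corollary \ref{cor:fibered}, so no new argument is needed, and everything else is a direct application of Lemma \ref{lem:g1}, \cite{kalf}, and \cite{thurston}.
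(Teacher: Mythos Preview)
Your proposal is correct and follows essentially the same approach as the paper: the paper's proof simply cites ``Corollary \ref{cor:fibered} and its proof'' to obtain that $K'$ is hyperbolic and admits an order-$q$ cosmetic generalized crossing change, then invokes Lemma \ref{lem:g1} for algebraic sliceness --- you have merely unpacked that citation explicitly. The only cosmetic difference is that the paper deduces hyperbolicity first (via the same fibered/torus/Thurston reasoning you give) and algebraic sliceness second, whereas you reverse the order.
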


\begin{proof}
By Corollary \ref{cor:fibered} and its proof, $K'$ is hyperbolic and admits a cosmetic generalized crossing change of order $q$.  Then by Lemma \ref{lem:g1}, $K'$ is algebraically slice.
\end{proof}

Finally, the following corollary summarizes the progress we have made in this paper towards answering Problem \ref{q:gncc}.

\begin{cor}\label{cor:main}
If there exists a knot admitting a cosmetic generalized crossing change of order $q$ with $|q| \geq 6$, then there must be such a knot which is hyperbolic.
\end{cor}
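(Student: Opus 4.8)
\emph{Proof proposal.}
The plan is to combine Thurston's trichotomy for knots with the results already established in this section. Let $K$ be a knot admitting a cosmetic generalized crossing change of order $q$ with $|q| \geq 6$; the goal is to produce a \emph{hyperbolic} knot with the same property. By Thurston's classification \cite{thurston}, $K$ is a torus knot, a hyperbolic knot, or a satellite knot, so I would argue case by case.

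If $K$ is hyperbolic there is nothing to do: $K$ itself is the desired knot. If $K$ is a torus knot, then $K$ is fibered, and Kalfagianni \cite{kalf} showed that fibered knots admit no cosmetic generalized crossing change of any order; hence this case cannot occur and is discarded.

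The remaining case is the one that uses the main machinery of the paper: $K$ is a satellite knot. Here I would invoke Corollary \ref{cor:sat} directly, which asserts that $K$ admits a pattern knot $K'$ that is hyperbolic. The content of that corollary — unwound through Theorem \ref{thm:main} — is that such a $K'$ can be chosen to be a non-satellite, non-torus knot which itself carries an order-$q$ cosmetic generalized crossing change; in particular the crossing circle inherited by $K'$ is shown to be non-nugatory. Thus $K'$ is a hyperbolic knot admitting a cosmetic generalized crossing change of order $q$ with $|q| \geq 6$, which is exactly what we want.

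The only subtle point, and the only place where genuine work is hidden, is the assertion in the satellite case that the crossing change descends to the pattern knot as an \emph{honest} (non-nugatory) cosmetic crossing change rather than becoming trivial. This is not automatic, but it is precisely the conclusion of the subcase analysis in the proof of Theorem \ref{thm:main}, which rests on Motegi's Lemma \ref{thm:motegi} and McCullough's Dehn-twist theorem (Theorem \ref{thm:dehntw}); beyond citing those results no further argument is needed.
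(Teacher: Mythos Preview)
Your proposal is correct and follows essentially the same line as the paper's own proof: apply Thurston's trichotomy, rule out torus knots via Kalfagianni's result on fibered knots, and in the satellite case invoke Corollary~\ref{cor:sat} (and through it Theorem~\ref{thm:main}) to obtain a hyperbolic pattern knot carrying an order-$q$ cosmetic generalized crossing change. Your remark on the non-nugatory subtlety is apt and matches the paper's implicit reliance on the proof of Corollary~\ref{cor:sat}.
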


\begin{proof}
Suppose there is a knot $K$ with a cosmetic generalized crossing change of order $q$ with $|q| \geq 6$.  Since $K$ cannot be a fibered knot, $K$ is not  torus knot and either $K$ itself is hyperbolic, or $K$ is a satellite knot. If $K$ is a satellite knot, then by Corollary \ref{cor:sat} and its proof, $K$ admits a pattern knot $K'$ which is hyperbolic and has an order-$q$ cosmetic generalized crossing change.
\end{proof}

\bibliographystyle{amsplain}
\bibliography{references}

\end{document}